\newtheorem{thm}{Theorem}[section]
\newtheorem{lem}[thm]{Lemma}
\newtheorem{prop}[thm]{Proposition}
\newtheorem{cor}[thm]{Corollary}
\newtheorem*{theorem*}{Theorem}
\theoremstyle{remark}
\newtheorem{rem}[thm]{Remark}
\newtheorem{defn}[thm]{Definition}
\newtheorem{ex}[thm]{Example}
\numberwithin{equation}{section}
\begin{document}
\title[Prime and primitive ideals  of ultragraph  Leavitt path algebras]{Prime and primitive ideals  of ultragraph  Leavitt path algebras}

\author[A. Pourabbas, M. Imanfar  and H. Larki]{A. Pourabbas, M. Imanfar and H. Larki}

\address{Faculty of Mathematics and Computer Science,
	Amirkabir University of Technology, 424 Hafez Avenue, 15914
	Tehran, Iran.}
\email{arpabbas@aut.ac.ir, m.imanfar@aut.ac.ir}
\address{Department of Mathematics, Faculty of Mathematical Sciences and Computer,
	Shahid Chamran University, Ahvaz, Iran}
\email{h.larki@scu.ac.ir}



\subjclass[2010]{16W10,16D25}

\keywords{Ultragraph, Leavitt path algebra, prime ideal,  primitive ideal}

\begin{abstract}
	Let $\mathcal G$ be an ultragraph and let $K$ be a field. We describe prime  and primitive ideals in the ultragraph Leavitt path algebra $L_K(\mathcal G)$. We identify the  graded prime ideals in terms of  downward directed sets and then we characterize the non-graded prime ideals. We show that the non-graded prime ideals of $L_K(\mathcal G)$ are always primitive. 
\end{abstract}

\maketitle

\section{Introduction}

Let $E$ be a (directed) graph. The Leavitt path algebra $L_K(E)$, which is a purely algebraic analogue of graph Cuntz-Krieger $C^*$-
algebra $C^*(E)$ \cite{kum1,fow}, was introduced in \cite{abr,abr1}. The algebras $L_K(E)$ are generalizations of the Leavitt algebras $L(1,n)$ \cite{lea}. The study of the structure of prime and primitive Leavitt path algebras have been the subject of a series of papers in recent years (see \cite{arp1,arp2,abr2}). For a  unital commutative ring $R$, the graded prime (primitive) ideals of $L_R(E)$ are characterized in \cite{lar1} via special subsets of the vertex (called maximal tails). Furthermore, the structure of  non-graded prime and primitive ideals of $L_K(E)$ have been identified in \cite{kul}. It was shown in \cite{kul} that there is a one-to-one  correspondence between non-graded prime (primitive) ideals of $L_K(E)$ and maximal tails containing a loop without exits and the prime spectrum of $K[x,x^{-1}]$.

Ultragraph Leavitt path algebras have been widely studied, see \cite{gon,fir,cas,gon1,gon2,nam,haz}. Ultragraph Leavitt path algebra $L_R(\mathcal G)$ was introduced in \cite{ima} as the algebraic version of ultragraph $C^*$-algebra $C^*(\mathcal G)$ \cite{tom1}. The algebras $L_R(\mathcal G)$ are generalizations of the Leavitt path algebras $L_R(E)$. The structure of ultragraph Leavitt path algebras are more complicated, because in ultragraphs the range of each edge is allowed to be a nonempty set of vertices rather than a single vertex. The class of ultragraph Leavitt path algebras is strictly larger than the class of Leavitt path algebras of directed graphs.  Also, every Leavitt path algebra of a directed graph can be embedded as a subalgebra in a unital ultragraph Leavitt path algebra.

The aim of this paper is to  give a complete description of the prime ideals as well as the primitive ideals of $L_K(\mathcal G)$. We start in Section \ref{s2} by recalling the definition of the quotient ultragraph $\mathcal G/(H,S)$ and its Leavitt path algebra $L_K\big(\mathcal{G}/(H,S)\big)$. In Section \ref{s3}, we characterize the graded prime ideals in terms of the downward directed sets. To describe the structure of non-graded prime ideals,  we investigate the structure of the closed ideals of $L_K\big(\mathcal{G}/(H,S)\big)$ which contain no nonzero set idempotents. In Section \ref{s4}, we give a complete description of primitive ideals. We show that a graded prime ideal $I_{(H,B_H)}$ is primitive if and only if the quotient ultragraph $\mathcal{G}/(H,B_H)$ satisfies Condition (L). Finally, we prove that every non-graded prime ideal in $L_K(\mathcal G)$ is primitive.

\section{Preliminaries}\label{s2}

In this section, we briefly review the basic definitions and properties of
ultragraphs \cite{tom1}, quotient ultragraphs \cite{lar2} and ultragraph Leavitt path algebras \cite{ima}.

An \emph{ultragraph} $\mathcal{G}=(G^0,\mathcal{G}^1,r_{\mathcal{G}},s_{\mathcal{G}})$ consists of a countable set of vertices $G^0$, a countable set of edges $\mathcal{G}^1$, the source map $s_{\mathcal{G}}:\mathcal{G}^1\rightarrow G^0$ and the range map $r_{\mathcal{G}}:\mathcal{G}^1 \rightarrow \mathcal{P}(G^0)\setminus \{\emptyset\}$, where $\mathcal{P}(G^0)$ denotes the collection of all subsets of $G^0$. By an \emph{algebra} in $\mathcal P(X)$, we mean a collection of subsets of $X$ which is closed  under the set operations $\cup$, $\cap$ and $\setminus$. We write $\mathcal{G}^0$ for the smallest algebra in $\mathcal P(G^0)$ containing $\big\{\{v\},r_{\mathcal{G}}(e):v\in G^0\,\,\mathrm{and}\,\, e\in\mathcal{G}^1\big\}$.

\begin{defn}
	A subcollection $H\subseteq\mathcal{G}^0$ is  \emph{hereditary} if
	\begin{enumerate}
		\item $\{s_{\mathcal{G}}(e)\}\in H$ implies $r_{\mathcal{G}}(e)\in H$ for all $e\in\mathcal{G}^1$.
		\item $A\cup B\in H$ for all $A,B\in H$.
		\item $A\in H$, $B\in\mathcal{G}^0$ and $B\subseteq A$, imply $B\in H$.
	\end{enumerate}
\end{defn}
The subcollection $H\subseteq\mathcal{G}^0$ is \emph{saturated} if, whenever $0<|s_{\mathcal{G}}^{-1}(v)|<\infty$ satisfies $r_{\mathcal{G}}(e)\in H$ for all $e\in s^{-1}_{\mathcal{G}}(v)$, we have $\{v\}\in H$. Let $H\subseteq\mathcal{G}^0$ be a  saturated hereditary subcollection. We define the \emph{breaking vertices} of $H$ to be the set 
$$B_H:=\Big\{ v\in G^0: \big|s_{\mathcal{G}}^{-1}(v)\big|=\infty \,\,\mathrm{and}~~ 0<\big|s_{\mathcal{G}}^{-1}(v)\cap \{e:r_{\mathcal{G}}(e)\notin H\}\big|<\infty\Big\}.$$
If $H\subseteq\mathcal{G}^0$ is a saturated hereditary subcollection and $S\subseteq B_H$, then $(H,S)$ is called an \emph{admissible pair} in $\mathcal{G}$.

\subsection{Quotient ultragraph} In order to define the quotient of ultragraphs we need to recall and introduce some notations. Let $(H,S)$ be an admissible pair in ultragraph $\mathcal{G}=(G^0,\mathcal{G}^1,r_{\mathcal G},s_{\mathcal G})$. Given $A\in\mathcal P(G^0)$, set $\overline{A}:=A\cup \{w':w\in A\cap (B_H\setminus S)\}$. The smallest algebra in $\mathcal P(\overline{G^0})$ containing 
$$\big\{\{v\}, \{w'\}, \overline{r_{\mathcal{G}}(e)}: v\in G^0, w\in B_H\setminus S~\mathrm{and}~e\in\mathcal{G}^1\big\}$$
is denoted by $\overline{\mathcal{G}^0}$. Let $\sim$ be a relation on $\overline{\mathcal{G}^0}$ defined by $A\sim B$ if and only if there exists $V\in H$ such that $A\cup V=B\cup V$.  Then, by \cite[Lemma 3.5]{lar2}, $\sim$ is an equivalent relation on $\overline{\mathcal{G}^0}$ and the operations
$$[A]\cup[B]:=[A\cup B], ~ [A]\cap [B]:=[A\cap B] ~ \mathrm{and}~ [A]\setminus [B]:= [A\setminus B]$$
are well-defined on the equivalent classes $\{[A]: A\in \overline{\mathcal{G}^0}\}$. It can be shown that $[A]=[B]$ if and only if both $A\setminus B$ and $B\setminus A$ belong to $H$.
\begin{defn}
	Let $(H,S)$ be an admissible pair in $\mathcal{G}$. The \emph{quotient ultragraph of $\mathcal{G}$ by $(H,S)$} is the quadruple $\mathcal{G}/(H,S):=(\Phi( G^0),\Phi(\mathcal{G}^1),r,s)$, where
	$$\Phi( G^0):=\big\{[\{v\}],[\{w'\}]:v\in G^0,\{v\}\notin H~\mathrm{and}~w\in B_H\setminus S\big\},$$
	$$\Phi(\mathcal{G}^1):=\big\{e\in{\mathcal{G}}^1:{ r_{\mathcal{G}}}(e)\notin H\big\},$$
	and $s: \Phi(\mathcal{G}^1) \rightarrow \Phi( G^0)$ and $ r:\Phi(\mathcal{G}^1) \rightarrow \{[A]:A\in\overline{\mathcal{G}^0}\}$ are the maps defined by $s(e):=[\{s_{\mathcal{G}}(e)\}]$ and $r(e):=[\overline{r_{\mathcal{G}}(e)}]$
	for every $e\in\Phi(\mathcal{G}^1)$, respectively.
\end{defn}

We  denote by $\Phi(\mathcal G^0)$  the smallest algebra in $\{[A]:A\in\overline{\mathcal{G}^0}\}$ containing 
$$\Phi( G^0)\cup\Big\{r(e): e\in  \Phi(\mathcal{G}^1) \Big\}.$$
One can see that $\Phi(\mathcal G^0)=\big\{[A]:A\in\overline{\mathcal{G}^0} \big\}$. If $A,B\in\overline{\mathcal{G}^0}$ and $[A]\cap [B]=[A]$, then we write $[A]\subseteq [B]$. Also,  we write $[v]$ instead of $[\{v\}]$ for every vertex $v\in G^0\setminus H$.

A \emph{path} in $\mathcal{G}/(H,S)$ is a finite sequence $\alpha=e_1e_2\cdots e_n$ of edges with $s(e_{i+1})\subseteq r(e_i)$ for $1\leq i\leq n-1$. We consider the elements of $\Phi(\mathcal G^0)$ as the paths of length zero. We let $\mathrm{Path}\big(\mathcal{G}/(H,S)\big)$ denotes  the set of all paths in $\mathcal{G}/(H,S)$. We define $[A]^*:=[A]$ and $\alpha^*:=e_n^*e_{n-1}^*\cdots e_1^*$, for every $[A]\in\Phi(\mathcal G^0)$ and $\alpha=e_1e_2\cdots e_n\in\mathrm{Path}\big(\mathcal{G}/(H,S)\big)$. The maps $r,s$ extend to $\mathrm{Path}\big(\mathcal{G}/(H,S)\big)$ in an obvious way.

\subsection{Leavitt path algebra} A vertex $[v]\in\Phi( G^0)$ is called an \emph{infinite emitter} if $|s^{-1}([v])|=\infty$  and a \emph{sink} if $|s^{-1}([v])|=\emptyset$. A \emph{singular vertex} is a vertex that is either a sink or an infinite emitter and we denote the set of singular vertices  by $\Phi_{\mathrm{sg}}( G^0)$.

\begin{defn}\label{deflp}
	Let $\mathcal{G}/(H,S)$ be a quotient ultragraph and let  $K$ be a field. A \emph{Leavitt $\mathcal{G}/(H,S)$-family} in a $K$-algebra $X$ is a set $\{q_{[A]},t_e,t_{e^*}:[A]\in\Phi(\mathcal G^0)\,\,\mathrm{and}\,\,e\in\Phi(\mathcal G^1)\}$ of elements in $X$ such that
	\begin{itemize}
		\item[(1)]$q_{[\emptyset]}=0$, $q_{[A]} q_{[B]}=q_{[A]\cap [B]}$ and $q_{[A]\cup [B]}=q_{[A]} +q_{[B]}-q_{[A]\cap [B]}$;
		\item[(2)] $q_{s(e)}t_e=t_eq_{r(e)}=t_e$ and $q_{r(e)}t_{e^*}=t_{e^*}q_{s(e)}=t_{e^*}$;
		\item[(3)] $t_{e^*}t_f=\delta_{e,f}q_{r(e)}$;
		\item[(4)] $q_{[v]}=\sum_{s(e)=[v]}t_e t_{e^*}$ whenever $[v]\in\Phi( G^0)\setminus \Phi_{\mathrm{sg}}( G^0)$.
	\end{itemize}
	The \emph{Leavitt path algebra of $\mathcal{G}/(H,S)$}, denoted by $L_K\big(\mathcal{G}/(H,S)\big)$, is defined to be the $K$-algebra generated by a universal Leavitt $\mathcal{G}/(H,S)$-family.
\end{defn}

 Let $\mathcal{G}$ be an ultragraph. If we consider the quotient ultragraph $\mathcal{G}/(\emptyset,\emptyset)$, then $[A]=\{A\}$  For every $A\in\mathcal{G}^0$. Thus we can consider the ultragraph $\mathcal{G}$ as the quotient ultragraph $\mathcal{G}/(\emptyset,\emptyset)$. So it makes sense to talk about the \emph{ultragraph Leavitt path algebra $L_K(\mathcal G)$} and define it as $L_K\big(\mathcal{G}/(\emptyset,\emptyset)\big)$. In fact, the definition of ultragraph Leavitt path algebras (\cite[Defintion 2.1]{ima}) is an special case of the Definition \ref{deflp}.
 
 By \cite[Theorem 2.15]{ima}, $L_K\big(\mathcal{G}/(H,S)\big)$ is of the
 form
 $$\mathrm{span}_K\big\{t_\alpha q_{[A]}t_{\beta^*}: \alpha,\beta\in \mathrm{Path}\big(\mathcal{G}/(H,S)\big) ~\mathrm{and}~ r(\alpha)\cap [A]\cap r(\beta)\ne[\emptyset]  \big\},$$
 where $t_\alpha:=t_{e_1}t_{e_2}\cdots t_{e_n}$ if $\alpha=e_1e_2\cdots e_n$ and $t_\alpha:=q_{[A]}$ if $\alpha=[A]$. Also, $L_K\big(\mathcal{G}/(H,S)\big)$ is a $\mathbb{Z}$-graded ring by the grading
 $$L_K\big(\mathcal{G}/(H,S)\big)_n=\mathrm{span}_K\big\{t_{\alpha}q_{[A]}t_{\beta^*}:|\alpha|-|\beta|=n\big\} \hspace{.5cm} (n\in\mathbb{Z}).$$
 
 Throughout the article we denote the universal Leavitt $\mathcal{G}$-family and $\mathcal{G}/(H,S)$-family by $\{s,p\}$ and $\{t,q\}$, respectively. Also, we suppose that $L_K(\mathcal G)=L_K(s,p)$ and $L_K\big(\mathcal{G}/(H,S)\big)=L_K(t,q)$.

 \section{Prime ideals}\label{s3}

  In this section, we give a complete description of the prime ideals of $L_K(\mathcal G)$. We first characterize the primeness of a graded ideal in terms of the downward directed sets and then we characterize the non-graded prime ideals of $L_K(\mathcal G)$.

 \subsection{Graded prime ideals}
  We recall the definition of downward directed sets from \cite[Definition 5.3]{lar2}. Let $\mathcal{G}$ be an ultragraph and $A,B\in\mathcal G^0$. We write $A\geq B$ if either $B\subseteq A$ or there is a path $\alpha$ of positive length such that $s_{\mathcal{G}}(\alpha)\in A$ and $B\subseteq r_{\mathcal{G}}(\alpha)$. For the sake of simplicity, we will write $v\geq w$ instead of $\{v\}\geq\{w\}$. A nonempty subset $M$ of $\mathcal{G}^0$ is said to be \emph{downward directed} if for every $A,B\in M$ there exists $\emptyset\ne C\in M$ such that $A,B\geq C$.
  
  \begin{lem}\label{lemgp}
  	Let $\mathcal{G}/(H,S)$ be a quotient ultragraph. Then every nonzero graded ideal of $L_K\big(\mathcal{G}/(H,S)\big)$ contains idempotent $q_{[A]}$ for some $[\emptyset]\ne[A]\in \Phi(\mathcal G^0)$. 
  \end{lem}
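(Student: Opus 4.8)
The plan is to take an arbitrary nonzero graded ideal $I$ of $L_K\big(\mathcal{G}/(H,S)\big)$ and produce inside it a nonzero set idempotent $q_{[A]}$. First I would pick a nonzero element $x \in I$. Since $I$ is graded, I may replace $x$ by one of its nonzero homogeneous components and assume $x$ is homogeneous, say of degree $n$. Using the spanning description of $L_K\big(\mathcal{G}/(H,S)\big)$ recalled after Definition \ref{deflp}, write $x = \sum_{i=1}^{m} k_i\, t_{\alpha_i} q_{[A_i]} t_{\beta_i^*}$ with $k_i \in K^\times$, $|\alpha_i| - |\beta_i| = n$, and $r(\alpha_i) \cap [A_i] \cap r(\beta_i) \ne [\emptyset]$; I may absorb $q_{[A_i]}$ into the range condition and assume $[A_i] \subseteq r(\alpha_i) \cap r(\beta_i)$, and by merging/splitting terms I can arrange the monomials to be distinct.

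The core of the argument is a "reduction" step. Multiplying $x$ on the left by $t_{\beta_j^*}$ and on the right by $t_{\beta_j}$ for a suitably chosen index $j$ (one whose $\beta_j$ is of maximal length, or more carefully minimal in the relevant sense), I would kill all but a controlled set of terms, using relation (3) $t_{e^*}t_f = \delta_{e,f} q_{r(e)}$ to collapse the tails $t_{\beta_i^*} t_{\beta_j}$: either $\beta_j$ is an initial segment of $\beta_i$, or $\beta_i$ is an initial segment of $\beta_j$, or the product vanishes. This is the standard Leavitt-path-algebra reduction, and it leaves an element of $I$ of the form $t_\gamma q_{[B]} + (\text{lower-length terms})$ or, after a symmetric reduction on the $\alpha$ side, an element of the form $q_{[B]} + y$ where $y$ is a sum of monomials $t_\gamma q_{[C]} t_{\delta^*}$ with $\gamma,\delta$ both nonempty. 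One then has to argue that $y$ can itself be annihilated: multiplying by an appropriate $t_{e^*}$ on the left pushes into the ideal generated within $q_{[B]} L q_{[B]}$, and iterating reduces the number of "bad" terms. Because paths have finite length and there are finitely many monomials, this process terminates and isolates a nonzero $q_{[B]} \in I$; since $I$ contains $q_{[B]}$, it contains $q_{[A]}$ for $[A] = [B] \neq [\emptyset]$.

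The main obstacle I anticipate is the bookkeeping in the reduction step: in the ultragraph setting the $q_{[A_i]}$ idempotents do not commute away as cleanly as vertex projections in an ordinary graph, so one must be careful that after multiplying by $t_{\beta_j^*}$ and $t_{\beta_j}$ the surviving terms still have nonzero range intersections and that the coefficient of the "leading" term does not cancel against others. The right way to handle this is to first choose among the $\beta_i$ one of maximal length, so that no $\beta_i$ is a proper extension of it; then $t_{\beta_j^*} x\, t_{\beta_j}$ retains a term $k_j q_{[A_j]}$ (times possibly $t_{\alpha'}$ coming from $\alpha_j = \beta_j \alpha'$ when $n \geq 0$, and symmetrically when $n<0$) while all other surviving summands share the factor $q_{[A_j']}$ for various $[A_j']$, and a further left multiplication by $t_{\gamma^*}$ where $\gamma$ is the longest remaining $\alpha$-tail isolates a genuine nonzero idempotent. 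I would lean on the faithfulness/grading machinery from \cite{ima} (in particular the graded uniqueness theorem) to guarantee the element we isolate is genuinely nonzero rather than only formally so.
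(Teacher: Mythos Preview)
Your route is quite different from the paper's. The paper dispatches the lemma in three lines by applying the graded uniqueness theorem \cite[Theorem~3.2]{ima} directly to the quotient map $\pi\colon L_K\big(\mathcal G/(H,S)\big)\to L_K\big(\mathcal G/(H,S)\big)/I$: if no $q_{[A]}$ with $[A]\neq[\emptyset]$ lies in $I$, then $\pi(q_{[A]})\neq 0$ for every such $[A]$, and since $\pi$ is graded the theorem forces $\pi$ to be injective, contradicting $I\neq 0$. No manipulation of individual elements is needed.

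Your combinatorial reduction is in the spirit of standard Leavitt-path-algebra arguments, but here it is circular rather than merely longer. In your last sentence you invoke the graded uniqueness theorem to certify that the element produced by the reduction is genuinely nonzero; once that theorem is on the table the lemma follows immediately as above, so the entire reduction becomes superfluous. Without that appeal the argument does not close: the monomials $t_\alpha q_{[A]} t_{\beta^*}$ are \emph{not} linearly independent (the Cuntz--Krieger relations create dependencies), so choosing a term of maximal $|\beta_j|$ in \emph{some} representation of $x$ and multiplying on both sides yields a $K$-linear combination of set idempotents that may perfectly well be zero---nothing in the bookkeeping you outline rules this out. (There is also a slip: multiplying on the left by $t_{\beta_j^*}$ interacts with the $t_{\alpha_i}$, not the $t_{\beta_i^*}$; you presumably meant $t_{\alpha_j^*}$ on the left, or a two-stage reduction.) One could rescue the approach by analysing $L_K\big(\mathcal G/(H,S)\big)_0$ as a direct limit of the finite-graph algebras $L_K(G_F)$ and arguing inside those, but that is a different argument from the one you sketch. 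As written, the proposal either leans on the very theorem that yields the result in one line, or has a gap at the nonvanishing step.
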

  \begin{proof}
  	Let $I$ be a nonzero graded ideal of $L_K\big(\mathcal{G}/(H,S)\big)$. Then the quotient map $\pi:L_K\big(\mathcal{G}/(H,S)\big)\rightarrow L_K\big(\mathcal{G}/(H,S)\big)/I$ is a graded homomorphism. Suppose that $q_{[A]}\notin I$ for every $[\emptyset]\ne[A]\in \Phi(\mathcal G^0)$. Then, by \cite[Theorem 3.2]{ima}, $\phi$ is injective, which is impossible.
  \end{proof}
  
  \begin{lem}\label{lempd}
  	Let $I$ be an ideal of $L_K(\mathcal G)$. Consider $H_I:=\{A\in\mathcal{G}^0:p_A\in I\}$. If $I$ is prime, then $\mathcal{G}^0\setminus H_I$ is downward directed.
  \end{lem}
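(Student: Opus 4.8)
The plan is to argue by contradiction, using the ideal-theoretic definition of primeness together with the $K$-linear spanning set of $L_K(\mathcal G)$ recorded after Definition~\ref{deflp}. First I would observe that $\mathcal G^0\setminus H_I\ne\emptyset$: prime ideals are proper, and if $p_A\in I$ for every $A\in\mathcal G^0$ then, since $s_e=p_{\{s_{\mathcal G}(e)\}}s_e$ and $s_{e^*}=s_{e^*}p_{\{s_{\mathcal G}(e)\}}$ by relation~(2), every generator of $L_K(\mathcal G)$ lies in $I$, forcing $I=L_K(\mathcal G)$. Now suppose, towards a contradiction, that $\mathcal G^0\setminus H_I$ is not downward directed. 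Then there exist $A,B\in\mathcal G^0\setminus H_I$ such that $p_C\in I$ whenever $C\in\mathcal G^0$ satisfies $C\ne\emptyset$, $A\geq C$ and $B\geq C$.

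The crux is the \emph{claim} that $p_A\,z\,p_B\in I$ for every $z\in L_K(\mathcal G)$. To prove it, write $z$ as a finite $K$-linear combination of spanning monomials $s_\alpha p_D s_{\beta^*}$. Using $s_\alpha p_{r_{\mathcal G}(\alpha)}=s_\alpha$ and $p_{r_{\mathcal G}(\beta)}s_{\beta^*}=s_{\beta^*}$ (relation~(2)) together with relation~(1), one may assume $\emptyset\ne D\subseteq r_{\mathcal G}(\alpha)\cap r_{\mathcal G}(\beta)$, with the convention that a length-zero left (resp.\ right) factor is absorbed so that the monomial reads $p_D s_{\beta^*}$ (resp.\ $s_\alpha p_D$). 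It then suffices to show $p_A\,s_\alpha p_D s_{\beta^*}\,p_B\in I$ for a single such monomial. By relations~(1)--(2), left multiplication by $p_A$ sends this monomial either to $0$, or --- when $|\alpha|\geq1$ --- back to itself provided $s_{\mathcal G}(\alpha)\in A$, or --- when $\alpha$ is trivial, so the monomial is $p_D s_{\beta^*}$ --- to $p_{A\cap D}s_{\beta^*}$; right multiplication by $p_B$ behaves symmetrically. Hence each nonzero outcome has the form $s_\gamma p_C s_{\delta^*}$ for some paths $\gamma,\delta$ and a set $\emptyset\ne C\in\mathcal G^0$ such that either $|\gamma|\geq1$ with $s_{\mathcal G}(\gamma)\in A$ and $C\subseteq r_{\mathcal G}(\gamma)$, or $\gamma$ is trivial with $C\subseteq A$; in either case $A\geq C$, and symmetrically $B\geq C$. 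By the choice of $A$ and $B$ we get $p_C\in I$, so $s_\gamma p_C s_{\delta^*}=(s_\gamma p_C)\,p_C\,(p_C s_{\delta^*})\in L_K(\mathcal G)\,p_C\,L_K(\mathcal G)\subseteq I$. Summing over the monomials of $z$ proves the claim.

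To finish, let $J_A$ and $J_B$ denote the two-sided ideals of $L_K(\mathcal G)$ generated by $p_A$ and $p_B$. Every element of $J_AJ_B$ is a finite sum of terms $u\,p_A\,m\,p_B\,w$ with $u,w,m$ in the unitisation of $L_K(\mathcal G)$, and $p_A m p_B\in I$ by the claim --- the case of scalar $m$ being $p_Ap_B=p_{A\cap B}\in I$, valid since $A\cap B\subseteq A$, $A\cap B\subseteq B$ and $p_{\emptyset}=0$. Hence $J_AJ_B\subseteq I$, and since $I$ is prime, $J_A\subseteq I$ or $J_B\subseteq I$, i.e.\ $p_A\in I$ or $p_B\in I$, contradicting $A,B\notin H_I$. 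Therefore $\mathcal G^0\setminus H_I$ is downward directed.

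I expect the main obstacle to be the bookkeeping hidden in the central claim: one must split into the four cases according to whether $\alpha$ and $\beta$ have positive or zero length, in each pin down exactly which subset $C$ of $r_{\mathcal G}(\alpha)\cap r_{\mathcal G}(\beta)$ --- possibly further intersected with $A$ and/or $B$ --- survives the multiplication, and check simultaneously that $C\ne\emptyset$ and that both $A\geq C$ and $B\geq C$ hold (via the surviving path on the corresponding side, or via an inclusion $C\subseteq A$, resp.\ $C\subseteq B$, when that path is trivial). The normalisation $D\subseteq r_{\mathcal G}(\alpha)\cap r_{\mathcal G}(\beta)$ of the spanning monomials and the relations~(1)--(2) of a Leavitt $\mathcal G$-family are precisely what make these checks routine.
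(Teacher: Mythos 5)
Your proof is correct and is essentially the contrapositive of the paper's own argument: the paper passes to the quotient $X=L_K(\mathcal G)/I$, uses primeness to get $\widetilde p_A X\widetilde p_B\neq\{0\}$, and reads $A,B\geq D$ off a surviving spanning monomial $s_\alpha p_C s_{\beta^*}$, whereas you run the same monomial computation in $L_K(\mathcal G)$ to show that the absence of a common descendant forces $J_AJ_B\subseteq I$. The core step is identical, and your write-up is in fact slightly more careful than the paper's, since you treat the length-zero cases of $\alpha,\beta$ and the nonemptiness of $\mathcal G^0\setminus H_I$ explicitly.
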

  \begin{proof}
  	 Let $X:=L_K(\mathcal G)/I$ and denote by $\widetilde x$ the image of $x\in C^*(\mathcal G)$ in $X$. For every $A,B\in \mathcal{G}^0\setminus H_I$, both $X\widetilde{p}_AX$ and $X\widetilde{p}_BX$ are nonzero ideals of $X$. Suppose that $I$ is a prime ideal of $L_K(\mathcal G)$. It follows that $X$ is a prime ring. Thus $X\widetilde{p}_AX\widetilde{p}_BX$ is a nonzero ideal of $X$ and consequently $\widetilde{p}_AX\widetilde{p}_B\ne\{0\}$. Since $X$ is of the form
  	 $$\mathrm{span}_K\big\{\widetilde s_\alpha\widetilde p_{C}\widetilde s_{\beta^*}:C\in\mathcal G^0, \alpha,\beta\in \mathrm{Path}(\mathcal G) ~\mathrm{and}~ r_{\mathcal G}(\alpha)\cap C\cap r_{\mathcal G}(\beta)\ne\emptyset\big\},$$
  	 there exist $\alpha,\beta\in \mathrm{Path}(\mathcal G)$ and $C\in\mathcal{G}^0$ such that ${p}_A(s_\alpha  p_{C}  s_{\beta^*}){p}_B\ne0$, which would mean that $s_{\mathcal{G}}(\alpha)\in A$ and $s_{\mathcal{G}}(\beta)\in B$. Thus if we set $D:=r_{\mathcal{G}}(\alpha)\cap C\cap r_{\mathcal{G}}(\beta)$, then we can deduce that $A,B\geq D$. Therefore $\mathcal{G}^0\setminus H_I$ is downward directed.
  \end{proof}

  	Let $(H,S)$ be an admissible pair in  $\mathcal{G}$. For any $w\in B_H$, set
  	$$p_w^H:=p_w-\sum_{s_{\mathcal G}(e)=w,~ r_{\mathcal{G}}(e)\notin H}s_e s_e^*,$$
  	and we define $I_{(H,S)}$ as the (two-sided) ideal of $L_K(\mathcal G)$ generated by the idempotents $\{p_A:A\in H\} \cup \left\{p_w^H:w\in S\right\}$. By \cite[Theorem 4.4]{ima}, $L_K\big(\mathcal{G}/(H,S)\big) \cong L_K(\mathcal G)/I_{(H,S)}$ and the correspondence $(H,S)\mapsto I_{(H,S)}$ is a bijection from the set of all admissible pairs of $\mathcal{G}$  to the set of all graded ideals of $L_K(\mathcal G)$.

  \begin{prop}\label{probn}
  	If $I_{(H,S)}$ is a prime ideal of $L_K(\mathcal G)$, then $|B_H\setminus S|\leq1$
  \end{prop}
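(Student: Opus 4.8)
The plan is to pass to the quotient. By \cite[Theorem 4.4]{ima} we have $L_K(\mathcal G)/I_{(H,S)}\cong L_K\big(\mathcal G/(H,S)\big)$, so $I_{(H,S)}$ is prime if and only if $L:=L_K\big(\mathcal G/(H,S)\big)$ is a prime ring. I will prove the contrapositive: if $|B_H\setminus S|\geq 2$, then $L$ is not prime. Recall that $L$, being a ring with local units, is prime precisely when $aLb=\{0\}$ forces $a=0$ or $b=0$; it therefore suffices to produce two nonzero idempotents $q,q'\in L$ with $qLq'=\{0\}$.

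Choose distinct $w_1,w_2\in B_H\setminus S$ and set $q:=q_{[\{w_1'\}]}$ and $q':=q_{[\{w_2'\}]}$. These lie in $L$ because $[\{w_1'\}],[\{w_2'\}]\in\Phi(G^0)$ by the definition of the quotient ultragraph, and they are nonzero since $q_{[A]}\ne 0$ for every $[A]\ne[\emptyset]$ (see \cite{ima}). The key elementary observation is that $w_1',w_2'$ are symbols lying outside $G^0$, whereas $H\subseteq\mathcal G^0\subseteq\mathcal P(G^0)$; hence for every ordinary vertex $v\in G^0$ we have $[\{w_1'\}]\cap[\{v\}]=[\{w_1'\}\cap\{v\}]=[\emptyset]$, and likewise $[\{w_1'\}]\cap[\{w_2'\}]=[\emptyset]$ (which also shows $q\ne q'$).

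Now compute $qLq'$ using the spanning form $L=\mathrm{span}_K\{t_\alpha q_{[A]}t_{\beta^*}\}$ from \cite[Theorem 2.15]{ima}. If $\alpha$ has positive length, then $t_\alpha=q_{s(\alpha)}t_\alpha$ with $s(\alpha)=[\{s_{\mathcal G}(e_1)\}]$ an ordinary vertex, so $q\,t_\alpha=q_{[\{w_1'\}]\cap s(\alpha)}\,t_\alpha=q_{[\emptyset]}\,t_\alpha=0$; symmetrically $t_{\beta^*}q'=0$ whenever $\beta$ has positive length. Hence in $qLq'$ only the terms with $\alpha=[C]$ and $\beta=[D]$ of length zero survive, and each such term equals $q_{[\{w_1'\}]\cap[C]\cap[A]\cap[D]\cap[\{w_2'\}]}=q_{[\emptyset]}=0$ because $[\{w_1'\}]\cap[\{w_2'\}]=[\emptyset]$. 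Thus $qLq'=\{0\}$ with $q,q'\ne 0$, so $L$ is not prime, and therefore $|B_H\setminus S|\leq 1$ whenever $I_{(H,S)}$ is prime.

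The routine part is the bookkeeping with the spanning monomials and the equivalence-class identities; the only point needing a little care is pinning down the precise reference guaranteeing $q_{[A]}\ne 0$ for $[A]\ne[\emptyset]$, and this is the mild obstacle — everything else is formal.
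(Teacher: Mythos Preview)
Your argument is correct and follows essentially the same route as the paper's proof: both pass to the quotient $L_K\big(\mathcal G/(H,S)\big)$, pick two distinct $w_1,w_2\in B_H\setminus S$, and show $q_{[w_1']}\,L_K\big(\mathcal G/(H,S)\big)\,q_{[w_2']}=0$. The paper phrases this more tersely by observing that $[w_1']$ and $[w_2']$ are sinks in $\mathcal G/(H,S)$ with $q_{[w_1']}q_{[w_2']}=0$, which is exactly what your explicit monomial computation unpacks.
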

\begin{proof}
	Assume to the contrary that $w,z\in B_H\setminus S$. Let $I$ and $J$ be two ideals of $L_K\big(\mathcal{G}/(H,S)\big)$ generated by $q_{[w']}$ and $q_{[z']}$, respectively. Since $[w'],[z']$ are two sinks in $\mathcal{G}/(H,S)$ and $q_{[w']}q_{[z']}=0$, we have that $q_{[w']}L_K\big(\mathcal{G}/(H,S)\big) q_{[z']}=0$. Thus $IJ={0}$, contradicting the primeness of $L_K\big(\mathcal{G}/(H,S)\big)$.
\end{proof}

\begin{thm}\label{thmgp}
	Let $\mathcal{G}$ be an ultragraph. Set
	$$X_1=\{I_{(H,B_H)}:\mathcal{G}^0\setminus H\mathrm{~is~downward~ directed}\}$$
	and
	$$X_2=\{I_{(H,B_H\setminus\{w\})}:w\in B_H~\mathrm{and}~ A\geq w \mathrm{~for~all~}A\in\mathcal{G}^0\setminus H\}.$$
	Then $X_1\cup X_2$ is the set of all graded prime  ideals of $L_K(\mathcal G)$.
\end{thm}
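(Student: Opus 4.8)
The plan is to fix an admissible pair $(H,S)$ and, using $L_K(\mathcal G)/I_{(H,S)}\cong L:=L_K\big(\mathcal G/(H,S)\big)$ together with the bijection $(H,S)\mapsto I_{(H,S)}$ of \cite[Theorem 4.4]{ima}, to decide exactly when the ring $L$ is prime. The key reduction is that $L$ is prime if and only if it is graded prime (the product of two nonzero graded ideals is nonzero): only ``$\Leftarrow$'' needs work, and there I would use that $L$ is $\mathbb Z$-graded with a set of degree-zero local units, running the standard ordered-group argument (from $aLb=\{0\}$ with $a,b\ne 0$, comparing lowest homogeneous components gives $a_{m_1}Lb_{n_1}=\{0\}$, so the graded ideals $La_{m_1}L$ and $Lb_{n_1}L$ have zero product, one of them vanishes, and then the local units force the corresponding homogeneous component to vanish, a contradiction). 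Granting this, Lemma \ref{lemgp} and the fact that the $q_{[A]}$ are self-absorbing idempotents reduce primeness of $L$ to
$$q_{[A]}\,L\,q_{[B]}\ne\{0\}\qquad\text{for all }[\emptyset]\ne[A],[B]\in\Phi(\mathcal G^0),$$
and expanding an element of such a product in the spanning family $\{t_\gamma q_{[C]}t_{\delta^*}\}$ of \cite[Theorem 2.15]{ima} by means of the defining relations of Definition \ref{deflp} (taking $[D]=r(\gamma)\cap r(\delta)$ for the two connecting paths) shows that this holds exactly when $\Phi(\mathcal G^0)\setminus\{[\emptyset]\}$ is downward directed in $\mathcal G/(H,S)$.

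To see that a graded prime $I_{(H,S)}$ lies in $X_1\cup X_2$: Proposition \ref{probn} forces $|B_H\setminus S|\le 1$, so $S=B_H$ or $S=B_H\setminus\{w\}$ for a unique $w\in B_H$. If $S=B_H$, then $p_A\mapsto q_{[A]}$ and $q_{[A]}=0\iff[A]=[\emptyset]\iff A\in H$, so the set $H_I=\{A\in\mathcal G^0:p_A\in I_{(H,S)}\}$ of Lemma \ref{lempd} is exactly $H$, and that lemma yields $\mathcal G^0\setminus H$ downward directed, i.e. $I_{(H,B_H)}\in X_1$. If $S=B_H\setminus\{w\}$, fix $A\in\mathcal G^0\setminus H$; then $q_{[A]}$ and $q_{[w']}$ are nonzero idempotents of the prime ring $L$, so their two-sided ideals have nonzero product, whence $q_{[A]}Lq_{[w']}\ne\{0\}$. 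Expanding such an element while using that $[w']$ is a sink whose (primed) label meets no set of $\mathcal G^0$ and no genuine vertex label forces a nonzero term $t_\gamma q_{[w']}$ with $\gamma=e_1\cdots e_m$ of positive length, $[\{s_{\mathcal G}(e_1)\}]\subseteq[A]$ and $w\in r_{\mathcal G}(e_m)$; read back in $\mathcal G$ this says $A\ge w$, so $I_{(H,B_H\setminus\{w\})}\in X_2$.

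For the reverse inclusion I would verify the displayed condition in each case. If $I_{(H,B_H)}\in X_1$ and $A,B\in\mathcal G^0\setminus H$, downward directedness gives $D\in\mathcal G^0\setminus H$ with $A,B\ge D$ in $\mathcal G$; since $H$ is hereditary, an edge lying on a path whose final range avoids $H$ must itself have range outside $H$, so the connecting $\mathcal G$-paths are paths $\alpha,\beta$ of $\mathcal G/(H,B_H)$ with $s(\alpha)\subseteq[A]$, $s(\beta)\subseteq[B]$, $[D]\subseteq r(\alpha)\cap r(\beta)$, and then $q_{[A]}(t_\alpha q_{[D]}t_{\beta^*})q_{[B]}=t_\alpha q_{[D]}t_{\beta^*}\ne 0$. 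If $I_{(H,B_H\setminus\{w\})}\in X_2$, the hypothesis should be upgraded to ``$[P]\ge[w']$ for all $[\emptyset]\ne[P]\in\Phi(\mathcal G^0)$'', whence $[w']$ is a common lower bound of every nonzero label and $q_{[A]}(t_\alpha q_{[w']}t_{\beta^*})q_{[B]}=t_\alpha q_{[w']}t_{\beta^*}\ne 0$. In either case $L$ is graded prime, hence prime; and since $w\in B_H$, the families $X_1$ and $X_2$ are disjoint. (These same arguments also re-prove Lemma \ref{lempd} and Proposition \ref{probn}, by exhibiting nonzero idempotents with zero sandwich product whenever the relevant combinatorial condition fails.)

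The step I expect to be the real obstacle is the upgrade just invoked, namely
$$\big(A\ge w\text{ for all }A\in\mathcal G^0\setminus H\big)\iff\big([P]\ge[w']\text{ for all }[\emptyset]\ne[P]\in\Phi(\mathcal G^0)\big).$$
Here ``$\Leftarrow$'' is easy (apply it to labels $[A]$, $A\in\mathcal G^0\setminus H$, and pull $[A]\ge[w']$ back to $\mathcal G$). For ``$\Rightarrow$'' the delicate case is a label $[P]$ with $w\in P$ but $w'\notin P$: then ``$P\ge w$'' holds vacuously and carries no path, so one must use $w\in B_H$ to choose an edge $e$ with $s_{\mathcal G}(e)=w$, $r_{\mathcal G}(e)\notin H$, apply the hypothesis to the set $r_{\mathcal G}(e)\in\mathcal G^0\setminus H$ to obtain a $\mathcal G$-path out of $r_{\mathcal G}(e)$ whose final range contains $w$, and splice these into $[P]\ge[\{w\}]\ge[w']$ in the quotient — the last link using that $w'\in\overline{r_{\mathcal G}(e')}$ precisely when $w\in r_{\mathcal G}(e')$, and heredity of $H$ again to keep the path inside $\Phi(\mathcal G^1)$. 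The rest is the kind of relation-chasing already illustrated in the proofs of Lemmas \ref{lemgp} and \ref{lempd}.
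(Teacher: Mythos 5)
Your proposal is correct and follows essentially the same route as the paper: reduce to graded primeness of the quotient $L_K\big(\mathcal G/(H,S)\big)$, use Lemma \ref{lemgp} to replace nonzero graded ideals by nonzero idempotents $q_{[A]}$, and then translate $q_{[A]}Lq_{[B]}\neq\{0\}$ into the combinatorial conditions via the spanning family, with Proposition \ref{probn} and Lemma \ref{lempd} handling the forward direction. The only substantive difference is that you spell out the step the paper dismisses with ``in a similar way as before'' in the $X_2$ case, namely upgrading ``$A\geq w$ for all $A\in\mathcal G^0\setminus H$'' to ``$[P]\geq[w']$ for all $[\emptyset]\neq[P]$'' including the vacuous subcase $w\in A$ (handled correctly via an edge $e$ with $s_{\mathcal G}(e)=w$ and $r_{\mathcal G}(e)\notin H$), which is a genuine detail worth recording.
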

\begin{proof}
	Let $X$ be the set of all graded prime  ideals of $L_K(\mathcal G)$. We prove that $X=X_1\cup X_2$.
	
    Assume $I_{(H,B_H)}\in X_1$. We show that the zero ideal of $L_K\big(\mathcal{G}/(H,B_H)\big)$ is prime. Since $\{0\}$ is a graded ideal, it suffices to prove that for every nonzero graded  ideals $I,J$ of $L_K\big(\mathcal{G}/(H,B_H)\big)$, $IJ\ne\{0\}$ (see \cite[Proposition $\rm I\rm I.1.4$]{nas}). If $I$ and $J$ are such ideals, then by  Lemma \ref{lemgp}, there exist nonzero idempotents $q_{[A]}\in I$ and $q_{[B]}\in J$. Since $A,B\in\mathcal{G}^0\setminus H$ and $\mathcal{G}^0\setminus H$ is downward directed, there exists $C\in\mathcal G^0\setminus H$ such that $A,B\geq C$. Thus $q_{[C]}\in I\cap J$ and therefore $\{0\}$ is a prime ideal. Since $L_K\big(\mathcal{G}/(H,B_H)\big) \cong L_K(\mathcal G)/I_{(H,B_H)}$, we deduce that $I_{(H,B_H)}$ is prime. Hence $X_1\subseteq X$.
    
    Let $I_{(H,B_H\setminus\{w\})}\in X_2$. Then $\mathcal{G}^0\setminus H$ is downward directed, because $A\geq w$ for all $A\in\mathcal{G}^0\setminus H$. Now, in a similar way as before we obtain that $I_{(H,B_H\setminus\{w\})}$ is prime. Thus $X_2\subseteq X$.
    
    To establish the reverse inclusion, let $I_{(H,S)}\in X$. From Proposition \ref{probn} we have that either $S=B_H$ or $S=B_H\setminus\{w\}$ for some $w\in B_H$. If $S=B_H$, then, by Lemma \ref{lempd}, $\mathcal{G}^0\setminus H$ is downward directed. Hence $I_{(H,S)}\in X_1$. 
    
    Let $S=B_H\setminus\{w\}$ and $A\in\mathcal{G}^0\setminus H$. We show that $A\geq w$. Clearly the result holds for $w\in A$, so let $w\notin A$. By the primeness of $L_K\big(\mathcal{G}/(H,S)\big)$, we must have $IJ\ne\{0\}$ for every nonzero ideals $I$ and $J$ of $L_K\big(\mathcal{G}/(H,S)\big)$. Thus
    $$L_K\big(\mathcal{G}/(H,S)\big)q_{[\overline A]}L_K\big(\mathcal{G}/(H,S)\big)q_{[w']}L_K\big(\mathcal{G}/(H,S)\big)\ne\{0\},$$
    and hence $q_{[\overline A]}L_K\big(\mathcal{G}/(H,S)\big)q_{[w']}\ne \{0\}$. Since $[w']$ is a sink and $[\overline A]\cap[w']=[\emptyset]$, there exists a path $\alpha$ of positive length such that $q_{[\overline A]}t_{\alpha}q_{[w']}\ne0$. Therefore $s(\alpha)\subseteq[A]$ and $[w']\subseteq r(\alpha)$, which implies that $s_{\mathcal G}(\alpha)\in A$ and $w\in r_{\mathcal G}(\alpha)$. Thus $A\geq w$ and consequently $I_{(H,S)}\in X_2$. This proves that $X\subseteq X_1\cup X_2$.
\end{proof}

From \cite[Theorem 5.4]{ima}, we know that the ultragraph $\mathcal G$ satisfies Condition (K) if and only if every ideal of $L_K(\mathcal G)$ is graded. The following corollary now follows from \cite[Theorem 5.4]{ima} and Theorem \ref{thmgp}.

\begin{cor}
	If the ultragraph $\mathcal{G}$ satisfies Condition (K), then Theorem \ref{thmgp} gives a complete description of the prime ideals of $L_K(\mathcal G)$.
\end{cor}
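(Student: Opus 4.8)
The plan is to read off the result by combining the Condition (K) dichotomy with Theorem \ref{thmgp}. First I would recall the precise content of \cite[Theorem 5.4]{ima}: the ultragraph $\mathcal{G}$ satisfies Condition (K) if and only if every two-sided ideal of $L_K(\mathcal G)$ is graded. Hence, under the hypothesis, an ideal of $L_K(\mathcal G)$ is prime precisely when it is a graded prime ideal. By Theorem \ref{thmgp} the graded prime ideals of $L_K(\mathcal G)$ are exactly the members of $X_1 \cup X_2$. Combining these two facts, $X_1 \cup X_2$ is precisely the set of all prime ideals of $L_K(\mathcal G)$, which is the assertion that Theorem \ref{thmgp} gives a complete description of the prime ideals.

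There is essentially no obstacle here, since the statement is an immediate corollary; the only point worth spelling out is that ``complete description'' should be read as the conjunction of two inclusions. On the one hand, every ideal belonging to $X_1 \cup X_2$ is a graded prime ideal by Theorem \ref{thmgp}, hence a prime ideal of $L_K(\mathcal G)$. On the other hand, an arbitrary prime ideal $P$ of $L_K(\mathcal G)$ is graded by \cite[Theorem 5.4]{ima} (since $\mathcal G$ satisfies Condition (K)), so $P$ is a graded prime ideal and therefore $P \in X_1 \cup X_2$, again by Theorem \ref{thmgp}. Finally, the bijectivity of $(H,S)\mapsto I_{(H,S)}$ on graded ideals recorded in \cite[Theorem 4.4]{ima} shows that distinct admissible pairs occurring in the description of $X_1 \cup X_2$ yield distinct prime ideals, so the enumeration is moreover non-redundant.
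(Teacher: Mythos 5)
Your proposal is correct and follows exactly the paper's (implicit) argument: under Condition (K) every ideal of $L_K(\mathcal G)$ is graded by \cite[Theorem 5.4]{ima}, so the prime ideals coincide with the graded prime ideals, which Theorem \ref{thmgp} identifies as $X_1\cup X_2$. The extra remark on non-redundancy via the bijection $(H,S)\mapsto I_{(H,S)}$ is a harmless bonus not present in the paper.
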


\subsection{Non-graded prime ideals}
Let $\mathcal{G}/(H,S)$ be a quotient ultragraph and let $\gamma=e_1e_2\cdots e_n$ be a path of positive length. If $s(\gamma)\subseteq r(\gamma)$, then $\gamma$ is called a \emph{loop}. Denote $\gamma^1:=\{e_1,e_2,\ldots,e_n\}$. We say that $\gamma$ has an \emph{exit} if either $r(e_i)\neq s(e_{i+1})$ for some $1\leq i\leq n$ or there exist an edge $f\in\Phi(\mathcal{G}^1)$ and an index $i$ such that $s(f)\subseteq r(e_i)$ but $f\ne e_{i+1}$. The quotient ultragraph $\mathcal{G}/(H,S)$ satisfies \emph{Condition (L)} if every loop  in $\mathcal{G}/(H,S)$ has  an exit.

\begin{lem}\label{lemme}
	Let $\mathcal{G}/(H,S)$ be a quotient ultragraph. If $\gamma=e_1e_2\cdots e_n$ is a loop in $\mathcal{G}/(H,S)$ without exits, then $I_{\gamma^0}$ and  $K[x,x^{-1}]$ are Morita equivalent as rings, where $I_{\gamma^0}$ is an ideal of $L_K\big(\mathcal{G}/(H,S)\big)$ generated by $\{q_{s(e_i)}:1\leq i\leq n\}$.
\end{lem}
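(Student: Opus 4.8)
The plan is to realize $K[x,x^{-1}]$ inside $I_{\gamma^0}$ as a corner ring and then verify that this corner is full, which gives Morita equivalence. First I would set $p := q_{s(e_1)}$ and consider the element $u := t_\gamma = t_{e_1}t_{e_2}\cdots t_{e_n}$. Since $\gamma$ is a loop with $s(\gamma)=s(e_1)$ and the no-exit hypothesis forces $r(e_i) = s(e_{i+1})$ (indices mod $n$) with $s^{-1}(s(e_{i+1})) = \{e_{i+1}\}$, the relations in Definition \ref{deflp} give $u^*u = q_{r(e_n)} = q_{s(e_1)} = p$ and, using the Cuntz-Krieger relation (4) at each vertex $s(e_i)$ (none of which is singular, because $e_i$ is its unique outgoing edge and $e_i\in\Phi(\mathcal G^1)$), one gets $uu^* = t_{e_1}t_{e_1}^* = q_{s(e_1)} = p$ as well. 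Thus $u$ is a unitary in the corner ring $pL_K\big(\mathcal G/(H,S)\big)p$, and I would show that $pL_K\big(\mathcal G/(H,S)\big)p = \mathrm{span}_K\{u^m : m\in\mathbb Z\} \cong K[x,x^{-1}]$: by the spanning form of $L_K\big(\mathcal G/(H,S)\big)$ recalled after Definition \ref{deflp}, a typical element of the corner is $p\,(t_\alpha q_{[A]} t_{\beta^*})\,p$, and $p t_\alpha \neq 0$ forces $\alpha$ to be a power of $\gamma$ (no exits means the only path emitted from $s(e_1)$ is $\gamma\gamma\cdots$), similarly for $\beta$, and $q_{r(\gamma^k)}q_{[A]}q_{r(\gamma^\ell)} = q_{s(e_1)\cap[A]}$ is either $0$ or $p$; collecting terms yields exactly the Laurent polynomials in $u$.

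Next I would identify the full corner. The natural idempotent is $e := \sum_{i=1}^n q_{s(e_i)}$, which is well-defined since the $q_{s(e_i)}$ are mutually orthogonal idempotents (the vertices $s(e_i)$ are distinct and $q_{[A]}q_{[B]} = q_{[A]\cap[B]}$); one checks $e\in I_{\gamma^0}$ and in fact $eL_K\big(\mathcal G/(H,S)\big)e$ is Morita equivalent to $pL_K\big(\mathcal G/(H,S)\big)p$ because $p = q_{s(e_1)} \leq e$ and $e = \sum_i u_{1i} p u_{1i}^*$ where $u_{1i} = t_{e_1}t_{e_2}\cdots t_{e_{i-1}}$ is a partial isometry with $u_{1i}^*u_{1i} = q_{s(e_i)}$ and $u_{1i}u_{1i}^* \leq p$; so $p$ is a full idempotent of $eL_K\big(\mathcal G/(H,S)\big)e$, whence $p L_K p$ and $e L_K e$ are Morita equivalent. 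Then I would argue that $I_{\gamma^0} = L_K\big(\mathcal G/(H,S)\big)\,e\,L_K\big(\mathcal G/(H,S)\big)$: the right-hand side is a two-sided ideal containing each $q_{s(e_i)}$, and conversely it is contained in $I_{\gamma^0}$ by definition, so they are equal. Consequently $e$ is a full idempotent in $I_{\gamma^0}$, and the standard fact that a full idempotent corner $eRe$ is Morita equivalent to $R$ (applied to $R = I_{\gamma^0}$) gives $I_{\gamma^0}\sim_{\mathrm{Morita}} eL_K e \sim_{\mathrm{Morita}} pL_K p \cong K[x,x^{-1}]$.

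The main obstacle I expect is the computation that $pL_K\big(\mathcal G/(H,S)\big)p$ is exactly $K[x,x^{-1}]$ and contains nothing more: this requires carefully using the no-exit hypothesis to rule out all monomials $t_\alpha q_{[A]}t_{\beta^*}$ for which $\alpha$ or $\beta$ is not a genuine power of the loop, and handling the degenerate cases where $\alpha$ or $\beta$ has length zero (so that $u^0 = p$ appears). A secondary subtlety is being careful about whether $\mathbb Z$-grading considerations or the possibility $q_{[A]\cap s(e_1)} = [\emptyset]$ could truncate the corner; since $u$ is invertible in the corner with inverse $u^*$, no such truncation occurs and all of $\{u^m\}_{m\in\mathbb Z}$ are genuinely nonzero. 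Once the corner is pinned down, the full-idempotent Morita arguments are routine.
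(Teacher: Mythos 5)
Your proposal is essentially the paper's argument: both identify the corner of $I_{\gamma^0}$ at $p=q_{s(\gamma)}$ with $K[x,x^{-1}]$ via the invertible element $t_\gamma$ and then use fullness of that corner to get the Morita equivalence (the paper packages the fullness as a surjective Morita context $\big(I_{\gamma^0},\,pI_{\gamma^0}p,\,I_{\gamma^0}p,\,pI_{\gamma^0}\big)$ rather than passing through your intermediate idempotent $e=\sum_i q_{s(e_i)}$, and it identifies the corner by mapping the one-loop Leavitt algebra $L_K(E)$ onto it and invoking the graded uniqueness theorem instead of computing the corner by hand). Two small corrections: the conjugation goes the other way, $q_{s(e_i)}=u_{1i}^*\,p\,u_{1i}$ with $u_{1i}=t_{e_1}\cdots t_{e_{i-1}}$, since your $u_{1i}\,p\,u_{1i}^*$ equals $p$ itself (this is harmless for fullness, as it still shows each generator $q_{s(e_i)}$ lies in $L_K p L_K$); and nonvanishing of the powers $u^m$ is not by itself enough to conclude $pL_Kp\cong K[x,x^{-1}]$ --- you also need their $K$-linear independence, which follows because $u^m$ is homogeneous of degree $m|\gamma|$ in the $\mathbb Z$-grading (this is exactly what the paper's appeal to the graded uniqueness theorem buys).
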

\begin{proof}
	Let $\gamma=e_1e_2\cdots e_n$ be a loop with no exists in $\mathcal{G}/(H,S)$ and $[v]=s(\gamma)$. Consider 
	$$\big(I_{\gamma^0},q_{[v]}I_{\gamma^0}q_{[v]}, I_{\gamma^0}q_{[v]}, q_{[v]}I_{\gamma^0},\psi,\phi\big),$$
	where $\psi(m\otimes n)=mn$ and $\phi(n\otimes m)=nm$. It can be shown that this is a (surjective) Morita context. Thus $I_{\gamma^0}$ and  $q_{[v]}I_{\gamma^0}q_{[v]}$ are Morita equivalent.
	
	Now we show that $q_{[v]}I_{\gamma^0}q_{[v]}\cong K[x,x^{-1}]$. Since $\gamma$ is a loop without exits, we deduce that
	$$I_{\gamma^0}=\mathrm{span}_K\big\{t_\mu q_{s(e_i)}t_{\nu^*}:s(e_i)\subseteq r(\mu)\cap r(\nu) ~\mathrm{and}~ 1\leq i\leq n \big\},$$
	and $q_{s(e_i)}=(t_{e_i}\cdots t_{e_n})q_{[v]}(t_{e_n^*}\cdots t_{e_i^*})$ for $1\leq i\leq n$. This implies that 
	$$q_{[v]}I_{\gamma^0}q_{[v]}=\mathrm{span}_K\big\{(t_\gamma)^m (t_{\gamma^*})^n:m,n\geq1\big\}.$$
	
	Let $E$ be the graph with one vertex $w$ and one loop $f$. Then  $\{p_w:=q_{[v]},s_f:=t_\gamma,s_{f^*}:=t_{\gamma^*}\}$ is a Leavitt $E$-family in $q_{[v]}I_{\gamma^0}q_{[v]}$. It follows from the universality of $L_K(E)$ and the graded uniqueness theorem that $L_K(E)\cong q_{[v]}I_{\gamma^0}q_{[v]}$. Since $L_K(E)\cong K[x,x^{-1}]$, we conclude that $I_{\gamma^0}$ and  $K[x,x^{-1}]$ are Morita equivalent.
\end{proof}

To prove the next lemma, we use the fact that $L_K\big(\mathcal{G}/(H,S)\big)$ can be estimated by the Leavitt path algebras of finite graphs. So, we recall this approximation from \cite[Section 3]{ima} and then we prove Lemma \ref{lemci}.
 
Let $\mathcal{G}/(H,S)$ be a quotient ultragraph and $F$ be a finite subset of $\Phi_{\mathrm{sg}}(G^0) \cup \Phi(\mathcal{G}^1)$. We construct a finite graph $G_F$ as follows. Let $F^0:=F\cap \Phi_{\mathrm{sg}}(G^0)$ and $F^1:=F\cap \Phi(\mathcal{G}^1)=\{e_1,\ldots,e_n\}$. For every $\omega=(\omega_1,\ldots, \omega_n)\in \{0,1\}^n\setminus \{0^n\}$, we define  $r(\omega):=\bigcap_{\omega_i=1}r(e_i)\setminus \bigcup_{\omega_j=0}r(e_j)$ and $R(\omega):=r(\omega)\setminus \bigcup_{[v]\in F^0}[v]$ which belong to $\Phi(\mathcal G^0)$. Set
\begin{multline*}
\Gamma_0:=\{\omega\in \{0,1\}^n\setminus\{0^n\}: $ where  vertices  $ [v_1],\ldots,[v_m] $ exist$\\
$ such  that $ R(\omega)=\bigcup_{i=1}^m[v_i] $ and $\emptyset\neq s^{-1}([v_i])\subseteq F^1 $ for $ 1\leq i\leq m\},
\end{multline*}
and
\[\Gamma_F:=\left\{\omega\in \{0,1\}^n\setminus \{0^n\}: R(\omega)\neq [\emptyset] \mathrm{~and ~} \omega\notin \Gamma_0 \right\}.\]

Define the finite graph $G_F=(G_F^0,G_F^1,r_F,s_F)$, where
\begin{align*}
G_F^0:=&F^0 \cup F^1 \cup \Gamma_F,\\
G_F^1:=&\left\{(e,f)\in F^1\times F^1: s(f)\subseteq r(e) \right\}\\
&\cup \left\{(e,[v])\in F^1\times F^0: [v]\subseteq r(e) \right\}\\
&\cup \left\{(e,\omega)\in F^1\times \Gamma_F: \omega_i=1 \mathrm{~~whenever~~} e=e_i \right\},
\end{align*}
with $s_F(e,f)=s_F(e,[v])=s_F(e,\omega)=e$ and $r_F(e,f)=f$, $r_F(e,[v])=[v]$, $r_F(e,\omega)=\omega$. By \cite[Lemma 3.1]{ima}, the elements
$$\begin{array}{lll}
P_e:=t_et_{e^*}, &P_{[v]}:=q_{[v]}(1-\sum\limits_{e\in F^1}t_et_{e^*}), &P_\omega:=q_{R(\omega)}(1-\sum\limits_{e\in F^1}t_et_{e^*}), \\
S_{(e,f)}:=t_eP_f, &S_{(e,[v])}:=t_e P_{[v]}, &S_{(e,\omega)}:=t_e P_\omega,\\
S_{(e,f)^*}:=P_ft_{e^*}, &S_{(e,[v])^*}:=P_{[v]}t_{e^*}, &S_{(e,\omega)^*}:=P_\omega t_{e^*},
\end{array}$$
form a Leavitt $G_F$-family in $L_K\big(\mathcal{G}/(H,S)\big)$ such that
$$L_K(G_F)\cong L_K(S,P)=\langle q_{[v]},t_e,t_{e^*}:[v]\in F^0,e\in F^1\rangle.$$

\begin{rem}\label{remlc}
	Let $\gamma:=e_1e_2\cdots e_n$ be a loop in $\mathcal{G}/(H,S)$ and $F$ be a finite subset of $\Phi_{\mathrm{sg}}(G^0) \cup \Phi(\mathcal{G}^1)$ containing $\{e_1,e_2,\ldots,e_n\}$. Then $\widetilde\gamma:=(e_1,e_2) \cdots(e_n,e_1)$ is a loop in $G_F$. Since the elements of $F^0\cup\Gamma_F$ are sinks in $G_F$, every loop in $G_F$ is of the form $\widetilde\beta$ where $\beta$ is a loop in $\mathcal{G}/(H,S)$. By using the argument of \cite[Lemma 4.8]{lar2}, we can show that $\gamma$ is a loop without exits in $\mathcal{G}/(H,S)$ if and only if $\widetilde\gamma$ is a loop without exits in $G_F$.
\end{rem} 

The set of vertices in the loops without exits of $\mathcal{G}/(H,S)$ is denoted by $P_c(\mathcal{G}/(H,S))$. Also, we denote by $I_{P_c(\mathcal{G}/(H,S))}$ the ideal of $L_K\big(\mathcal{G}/(H,S)\big)$ generated by the idempotents associated to the vertices in  $P_c(\mathcal{G}/(H,S))$.

\begin{lem}\label{lemci}
	Let $\mathcal{G}/(H,S)$ be a quotient ultragraph. If $z\in L_K\big(\mathcal{G}/(H,S)\big)\setminus I_{P_c(\mathcal{G}/(H,S))}$, then there exist $x,y\in L_K\big(\mathcal{G}/(H,S)\big)$ and $[\emptyset]\ne[A]\in \Phi(\mathcal G^0)$ such that $xzy=q_{[A]}$.
\end{lem}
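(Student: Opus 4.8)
The plan is to push the problem down to the Leavitt path algebra of one of the finite graphs $G_F$ recalled above, invoke the corresponding statement for Leavitt path algebras of finite graphs, and then lift the conclusion back to $L_K(\mathcal{G}/(H,S))$. First I would write $z$ as a finite $K$-linear combination of spanning elements $t_\alpha q_{[A]} t_{\beta^*}$ and choose a finite set $F\subseteq \Phi_{\mathrm{sg}}(G^0)\cup\Phi(\mathcal{G}^1)$ containing every edge occurring in these $\alpha$'s and $\beta$'s together with the relevant singular vertices; by the approximation of $L_K(\mathcal{G}/(H,S))$ by the algebras $L_K(G_F)$ developed in \cite[Section 3]{ima}, $F$ may be taken large enough that $z$ lies in the subalgebra $L_K(S,P)\cong L_K(G_F)$.

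Next I would transfer the hypothesis: I claim $z\notin I_{P_c(G_F)}$, the ideal of $L_K(G_F)$ generated by the vertices of $G_F$ lying on loops without exits. By Remark \ref{remlc}, every loop without exits in $G_F$ has the form $\widetilde\gamma$ for a loop without exits $\gamma=e_1\cdots e_n$ in $\mathcal{G}/(H,S)$ with $\gamma^1\subseteq F^1$, and its vertices are the edges $e_1,\ldots,e_n$; the idempotent associated with $e_i$ is $P_{e_i}=t_{e_i}t_{e_i^*}=q_{s(e_i)}t_{e_i}t_{e_i^*}$, which lies in $I_{P_c(\mathcal{G}/(H,S))}$ because $s(e_i)\in P_c(\mathcal{G}/(H,S))$. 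Hence $I_{P_c(G_F)}\subseteq I_{P_c(\mathcal{G}/(H,S))}$, and since $z\in L_K(G_F)$ and $z\notin I_{P_c(\mathcal{G}/(H,S))}$ we conclude $z\notin I_{P_c(G_F)}$.

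Now I would apply the graph analogue of the present lemma, a consequence of the Reduction Theorem for Leavitt path algebras (cf. \cite{kul}): since $z\in L_K(G_F)\setminus I_{P_c(G_F)}$, there are $x',y'\in L_K(G_F)$ and a vertex $u$ of $G_F$ with $x'zy'=P_u$. It then remains to turn $P_u$ into a set idempotent $q_{[A]}$ with $[A]\ne[\emptyset]$, according to the type of $u$. If $u=e\in F^1$, then $t_{e^*}P_e t_e=q_{r(e)}$, and $r(e)\ne[\emptyset]$ because $r_{\mathcal{G}}(e)\notin H$. If $u=[v]\in F^0$, then either $[v]$ is a sink and $P_{[v]}=q_{[v]}$, or $[v]$ is an infinite emitter and any $g\in s^{-1}([v])\setminus F^1$ satisfies $t_g^*P_{[v]}t_g=q_{r(g)}$. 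If $u=\omega\in\Gamma_F$, then $P_\omega=q_{R(\omega)}\big(1-\sum_{e\in F^1}t_e t_{e^*}\big)$; when no $e\in F^1$ has $s(e)\subseteq R(\omega)$ this equals $q_{R(\omega)}$, and otherwise one uses $\omega\notin\Gamma_0$ to locate inside $R(\omega)$ either a sink or infinite‑emitter vertex (handled as above) or a nonzero $[C]\in\Phi(\mathcal G^0)$ containing no vertex, for which $q_{[C]}P_\omega q_{[C]}=q_{[C]}$. Composing the multipliers produced in this step with $x'$ and $y'$ gives the required $x$ and $y$.

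I expect the last step to be the main obstacle: unlike in the graph case a vertex of $G_F$ does not simply translate back to a single vertex of $\mathcal{G}/(H,S)$, so one must exploit the precise definitions of $\Gamma_0$ and $\Gamma_F$ — and the small but necessary observation that a nonzero element of $\Phi(\mathcal G^0)$ containing no vertex can still serve as the output $q_{[A]}$ — in order to see that every type of vertex of $G_F$ can be pushed down to a nonzero $q_{[A]}$. Transferring the hypothesis $z\notin I_{P_c}$ to $G_F$ is the other point to get right, but it is routine once Remark \ref{remlc} is in hand.
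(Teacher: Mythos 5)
Your proposal is correct and follows essentially the same route as the paper's proof: embed $z$ into some $L_K(G_F)$, transfer the hypothesis via Remark \ref{remlc} by observing that the generators $P_{e_i}=t_{e_i}t_{e_i^*}$ of $I_{P_c(G_F)}$ already lie in $I_{P_c(\mathcal{G}/(H,S))}$, invoke the finite-graph reduction result to obtain $x'zy'=P_u$, and then convert $P_u$ into a nonzero $q_{[A]}$ by cases on the type of $u$. In fact your final case analysis is slightly more complete than the paper's, since you explicitly cover the sink subcase of $u=[v]\in F^0$ and the possibility that $R(\omega)$ contains a nonzero class $[C]$ with no vertices, both of which the paper's cases (1) and (3) pass over.
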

\begin{proof}
	Let $\{F_n\}$ be an increasing sequence of finite subsets of $(G^H_S)^0_{\mathrm{sg}} \cup \Phi(\mathcal{G}^1)$ such that $\cup_{n=1}^\infty F_n=\Phi_{\mathrm{sg}}(G^0) \cup \Phi(\mathcal{G}^1)$. Then 
	$$\bigcup\nolimits_n L_K(G_{F_n}) =\langle  q_{[v]},t_e,t_{e^*}:[v]\in\Phi_{\mathrm{sg}}(G^0),~e\in\Phi(\mathcal{G}^1)\rangle= L_K\big(\mathcal{G}/(H,S)\big).$$
	
	Now, let $z\in L_K\big(\mathcal{G}/(H,S)\big)\setminus I_{P_c(\mathcal{G}/(H,S))}$. There exists $F_n$ such that $z\in L_K(G_{F_n})$. We show that $z\notin I_{P_c(G_{F_n})}$. By Remark \ref{remlc}, we have
	$$P_c(G_{F_n})=\{\widetilde\alpha:\alpha\in P_c(\mathcal{G}/(H,S))~ \mathrm{and}~ \gamma^1 \subseteq G_{F_n}^1\}.$$
    Suppose that $\gamma:=e_1e_2\cdots e_n\in P_c(\mathcal{G}/(H,S))$. For every $1\leq i\leq n$ we have $P_{e_i}=t_{e_i}t_{e_i^*}\in I_{P_c(\mathcal{G}/(H,S))}$. Thus $I_{P_c(\mathcal{G}/(H,S))}$ contains generators of $I_{P_c(G_{F_n})}$. This implies that $I_{P_c(G_{F_n})}\subseteq L_K(G_{F_n})\cap I_{P_c(\mathcal{G}/(H,S))}$ and consequently $z\in L_K(G_{F_n})\setminus I_{P_c(G_{F_n})}$.
	
	By \cite[Proposition 5.2]{arp}, there exist $x',y'\in L_K(G_{F_n})$ and $w\in G_{F_n}^0$ such that $x'zy'=P_w$. We distinguish three cases.
	\begin{enumerate}
		\item Let $w=[v]\in F_n^0$. Since $[v]\in\Phi_{\mathrm{sg}}(G^0)$, there exists $f\in\Phi(\mathcal{G}^1)\setminus F_n^1$ such that $[v]=s(f)$. Set $x= t_{f^*}x'$ and $y=y't_f$. Then $xyz=q_{r(f)}$.
		\item If $w=e\in F_n^1$, then $t_{e^*}x'zy't_e=q_{r(e)}$.
		\item Let $w=\omega\in\Gamma_{F_n}$. Thus there exists a vertex $[v]\subseteq R(\omega)$ such that either $[v]$ is a sink or there is an edge $f\in\Phi(\mathcal{G}^1)\setminus F_n^1$ with $s(f)=[v]$. In the former case, we deduce that $q_{[v]}x'zy'=q_{[v]}Q_{\omega}=q_{[v]}$ and in the later case $t_{f}^*x'zy't_f=t_{f}^*Q_\omega t_f=q_{r(f)}$.
	\end{enumerate}
\end{proof}
 
Proposition \ref{proli} is an immediate consequence of Lemma \ref{lemci}.

\begin{prop}\label{proli}
	Let $\mathcal{G}/(H,S)$ be a quotient ultragraph. If $I$ is an ideal of $L_K\big(\mathcal{G}/(H,S)\big)$  with $\{[A]\neq[\emptyset]:q_{[A]}\in I\}=\emptyset$, then $I\subseteq I_{P_c(\mathcal{G}/(H,S))}$.
\end{prop}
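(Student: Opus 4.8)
The plan is to prove the contrapositive of the inclusion directly from Lemma~\ref{lemci}. Suppose $I$ is an ideal of $L_K\big(\mathcal{G}/(H,S)\big)$ and that $I\not\subseteq I_{P_c(\mathcal{G}/(H,S))}$; I must produce some $[\emptyset]\ne[A]\in\Phi(\mathcal G^0)$ with $q_{[A]}\in I$, which shows $\{[A]\ne[\emptyset]:q_{[A]}\in I\}\ne\emptyset$. Pick an element $z\in I\setminus I_{P_c(\mathcal{G}/(H,S))}$. Then $z\in L_K\big(\mathcal{G}/(H,S)\big)\setminus I_{P_c(\mathcal{G}/(H,S))}$, so Lemma~\ref{lemci} applies and yields $x,y\in L_K\big(\mathcal{G}/(H,S)\big)$ and $[\emptyset]\ne[A]\in\Phi(\mathcal G^0)$ with $xzy=q_{[A]}$.

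Since $I$ is a two-sided ideal and $z\in I$, we have $xzy\in I$, hence $q_{[A]}\in I$ with $[A]\ne[\emptyset]$. This contradicts the hypothesis $\{[A]\ne[\emptyset]:q_{[A]}\in I\}=\emptyset$. Therefore every ideal $I$ containing no nonzero set idempotent $q_{[A]}$ must satisfy $I\subseteq I_{P_c(\mathcal{G}/(H,S))}$, which is exactly the statement of Proposition~\ref{proli}.

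There is essentially no obstacle here: the entire content has been packaged into Lemma~\ref{lemci}, and Proposition~\ref{proli} is a one-line logical consequence (contrapositive plus the ideal property). The only point to be careful about is to phrase it so that the edge case $I=\{0\}$ is handled automatically: if $I=\{0\}$ then trivially $I\subseteq I_{P_c(\mathcal{G}/(H,S))}$, and indeed there is no nonzero $z\in I$ to feed into Lemma~\ref{lemci}, so the argument above is vacuously consistent. I would write the proof in two or three sentences, invoking Lemma~\ref{lemci} on an arbitrary $z\in I\setminus I_{P_c(\mathcal{G}/(H,S))}$ and concluding $q_{[A]}=xzy\in I$.
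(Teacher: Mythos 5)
Your argument is exactly the one the paper intends: the text states that Proposition~\ref{proli} is an immediate consequence of Lemma~\ref{lemci}, and your contrapositive argument (take $z\in I\setminus I_{P_c(\mathcal{G}/(H,S))}$, apply the lemma, and use that $I$ is two-sided to get $q_{[A]}=xzy\in I$) is precisely that deduction made explicit. The proof is correct and matches the paper's approach.
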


\begin{rem}
	Let $\mathcal{G}/(H,S)$ be a quotient ultragraph, $x\in L_K\big(\mathcal{G}/(H,S)$ and let the ideal of $L_K\big(\mathcal{G}/(H,S)$ generated by $I_{(H,S)}\cup\{x\}$ is denoted by $I_{\langle H,S,x \rangle}$. Suppose that $\gamma$ is a loop in $\mathcal{G}/(H,S)$ without exits and $f(x)$ is a polynomial in $K[x,x^{-1}]$. It can be shown that $I_{\langle f(t_\gamma) \rangle}= I_{\langle f(t_{\gamma'}) \rangle}$, where  $\gamma'$ is a permutation of $\gamma$.
\end{rem}
  
Let $H$ be a saturated hereditary subset of $\mathcal{G}^0$ and $\gamma=e_1e_2\cdots e_n$ be a loop in $\mathcal{G}$. We say that $\gamma=e_1e_2\cdots e_n$ is a loop in $\mathcal{G}^0\setminus H$ if $r_{\mathcal{G}}(\gamma)\in\mathcal{G}^0\setminus H$. Also,  $\gamma$ has \emph{an exit in} $\mathcal{G}^0\setminus H$ if either $r_{\mathcal{G}}(e_i)\setminus s_{\mathcal{G}}(e_{i+1})\in\mathcal{G}^0\setminus H$ for some $1\leq i\leq n$ or there exists an edge $f\in\mathcal{G}^1$ and an index $i$ such that $r_{\mathcal{G}}(f)\in \mathcal{G}^0\setminus H$ and $s_{\mathcal{G}}(f)\subseteq r_{\mathcal{G}}(e_i)$ but $f\ne e_{i+1}$. One can see that the quotient ultragraph  $\mathcal{G}/(H,B_H)$ satisfies Condition (L) if and only if every loop in $\mathcal{G}^0\setminus H$ has an exit in $\mathcal{G}^0\setminus H$. 
 
\begin{thm}\label{thmnp}
	Let $\mathcal{G}$ be an ultragraph and let $I$ be an ideal of $L_K(\mathcal G)$. Denote $H:=H_I$. Then $I$ is a non-graded prime ideal if and only if
	\begin{enumerate}
		\item$\mathcal{G}^0\setminus H$ is downward directed,
		\item $\mathcal{G}^0\setminus H$ contains a loop $\gamma$	without exits in $\mathcal{G}^0\setminus H$ and
		\item $I=I_{\langle H,B_H,f(s_\gamma)\rangle}$, where $f(x)$ is an irreducible polynomial in $K[x,x^{-1}]$.
	\end{enumerate}	    
\end{thm}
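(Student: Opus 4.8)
The plan is to analyse $I$ through the algebra $L_K(\mathcal G)/I$, presented in two stages: first quotient out the largest graded ideal of $L_K(\mathcal G)$ contained in $I$, then study what is left over a loop without exits. Put $H:=H_I$; the defining relations show at once that $H$ is saturated hereditary, and setting $S:=\{w\in B_H:p_w^H\in I\}$ we get $I_{(H,S)}\subseteq I$. Since every $p_A$ and every $p_w^H$ is homogeneous, $I_{(H,S)}$ is exactly the largest graded ideal of $L_K(\mathcal G)$ contained in $I$. Write $L:=L_K\big(\mathcal G/(H,S)\big)\cong L_K(\mathcal G)/I_{(H,S)}$ and let $J$ be the image of $I$ in $L$, so that $L_K(\mathcal G)/I\cong L/J$. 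By maximality of $I_{(H,S)}$, the ideal $J$ contains no nonzero set idempotent $q_{[A]}$ (a degree-zero lift of such a $q_{[A]}$ would generate a graded ideal strictly between $I_{(H,S)}$ and $I$), and $I$ is non-graded if and only if $J\ne\{0\}$.

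For the forward implication, assume $I$ is a non-graded prime ideal. Lemma \ref{lempd} gives (1). Being the largest graded ideal inside the prime ideal $I$, $I_{(H,S)}$ is prime with respect to graded ideals, hence prime by \cite[Proposition II.1.4]{nas}; thus $L$ is a prime ring, and Proposition \ref{probn} forces $|B_H\setminus S|\le 1$. As $J$ is a nonzero ideal of $L$ with no set idempotents, Proposition \ref{proli} gives $J\subseteq I_{P_c(\mathcal G/(H,S))}$, so $\mathcal G/(H,S)$ has a loop $\gamma$ without exits. If $S=B_H\setminus\{w\}$, the sink $[w']$ of $\mathcal G/(H,S)$ cannot be joined to $s(\gamma)$ by a path in either direction, so the nonzero ideals generated by $q_{[w']}$ and by $q_{s(\gamma)}$ annihilate one another, contradicting primeness of $L$; hence $S=B_H$. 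Similarly, two distinct loops without exits would generate orthogonal nonzero ideals, so $\gamma$ is the only one up to cyclic permutation and $I_{P_c(\mathcal G/(H,B_H))}=I_{\gamma^0}$; in particular $J\subseteq I_{\gamma^0}$. Since the assignment $\mathfrak b\mapsto I_{\gamma^0}\mathfrak b I_{\gamma^0}$ sends ideals of $q_{s(\gamma)}I_{\gamma^0}q_{s(\gamma)}\cong K[x,x^{-1}]$ to ideals of $L$ and respects products, $q_{s(\gamma)}Jq_{s(\gamma)}$ is a nonzero proper prime ideal of $K[x,x^{-1}]$, that is $\langle f\rangle$ for an irreducible $f$; recovering $J$ from its corner through the Morita context of Lemma \ref{lemme} gives that $J$ is the ideal of $L$ generated by $f(t_\gamma)$, whence $I=I_{\langle H,B_H,f(s_\gamma)\rangle}$. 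Finally $\gamma$ is a loop without exits in $\mathcal G/(H,B_H)$, which by the paragraph preceding the theorem together with Remark \ref{remlc} means $\gamma$ is a loop without exits in $\mathcal G^0\setminus H$; this is (2) and (3).

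For the converse, assume (1)--(3). By (1) and Theorem \ref{thmgp}, $I_{(H,B_H)}$ is prime, so $L=L_K\big(\mathcal G/(H,B_H)\big)$ is prime; as above $\gamma$ is the unique loop without exits and $I_{P_c}=I_{\gamma^0}$, and one checks exactly as in the forward direction that $H_I=H$ and that the image $J$ of $I$ in $L$ is the nonzero ideal generated by $f(t_\gamma)$ and has no set idempotents, so $I$ is non-graded with largest graded subideal $I_{(H,B_H)}$. It remains to prove $L/J$ is prime. Via Lemma \ref{lemme} and the irreducibility of $f$, the ideal $I_{\gamma^0}/J$ of $L/J$ is Morita equivalent to the field $K[x,x^{-1}]/\langle f\rangle$, hence is a nonzero simple idempotent ideal, and for a ring carrying such an ideal, primeness is equivalent to the implication $x\in L,\ xI_{\gamma^0}\subseteq J\Rightarrow x\in J$. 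So let $xI_{\gamma^0}\subseteq J$. If $x\notin I_{\gamma^0}=I_{P_c}$, Lemma \ref{lemci} gives $a,b\in L$ and $[\emptyset]\ne[A]$ with $axb=q_{[A]}$; primeness of $L$ together with the absence of exits from $\gamma$ yields a path $\alpha$ (possibly trivial) with $s(\alpha)\subseteq[A]$ and $s(\gamma)\subseteq r(\alpha)$, and from $q_{s(\gamma)}t_{\alpha^*}\,t_\alpha q_{s(\gamma)}=q_{s(\gamma)}$ the element $z:=t_\alpha q_{s(\gamma)}$ generates all of $I_{\gamma^0}$, so $z\notin J$; but $z=q_{[A]}z=a\,x\,(bz)$ with $bz\in I_{\gamma^0}$, so $x(bz)\notin J$, contradicting $xI_{\gamma^0}\subseteq J$. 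Hence $x\in I_{\gamma^0}$, and then the image of $x$ annihilates the simple idempotent ring $I_{\gamma^0}/J$, forcing $x\in J$. Thus $L/J$ is prime, so $I$ is prime.

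The crux is the primeness of $L/J$ in the converse. The difficulty is that $L$ itself need not be a matrix ring over $K[x,x^{-1}]$ — vertices emitting infinitely many edges may sit over the exitless loop — so the ideal lattice of $L$ is not transparent from the corner $q_{s(\gamma)}I_{\gamma^0}q_{s(\gamma)}$. What unlocks it is Lemma \ref{lemci}: any element of $L$ outside $I_{P_c}$ can be multiplied into a set idempotent, which by downward directedness reaches the exitless loop along an honest path, and such a path element is a full generator of $I_{\gamma^0}$ and therefore escapes $J$. A secondary point needing care is that $\gamma$ is determined only up to cyclic rotation and that primeness forces it to be the unique exitless loop; both are handled by the orthogonality argument above and by the remark that the ideal generated by $f(t_\gamma)$ does not depend on the chosen rotation of $\gamma$.
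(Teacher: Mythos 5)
Your proof is correct, and the forward implication follows the paper's route almost verbatim: pass to $L_K\big(\mathcal G/(H,S)\big)$, use Lemma \ref{lempd} for downward directedness, kill the extra breaking vertex by orthogonality against the sink $[w']$, locate the exitless loop via Proposition \ref{proli} (the paper invokes the Cuntz--Krieger uniqueness theorem instead, which amounts to the same machinery), and read off $f$ from the corner $q_{s(\gamma)}\widetilde I q_{s(\gamma)}$ of the Morita context of Lemma \ref{lemme}. Where you genuinely diverge is the converse, and your version is the stronger one. The paper concludes "$q_{s(\gamma)}\widetilde Iq_{s(\gamma)}$ is prime in $K[x,x^{-1}]$, therefore, by Lemma \ref{lemme}, $\widetilde I$ is a prime ideal of $L_K\big(\mathcal G/(H,B_H)\big)$"; but Lemma \ref{lemme} is a Morita equivalence between $I_{\gamma^0}$ and $K[x,x^{-1}]$, so on its face it only certifies $\widetilde I$ as a prime ideal of the ring $I_{\gamma^0}$, and transferring primeness from an ideal of an ideal to the ambient algebra is exactly the nontrivial step. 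You supply that bridge: $I_{\gamma^0}/J$ is a nonzero simple idempotent ideal of $L/J$, so primeness of $L/J$ reduces to $xI_{\gamma^0}\subseteq J\Rightarrow x\in J$, and the case $x\notin I_{P_c}$ is handled by Lemma \ref{lemci} together with a path from the resulting $q_{[A]}$ down to the loop, which produces a full generator of $I_{\gamma^0}$ lying in $a\,x\,I_{\gamma^0}$. This costs one extra application of Lemma \ref{lemci} but buys an argument that does not presuppose any ideal-transfer principle beyond what is actually proved in the paper; the only steps you leave at the same level of terseness as the paper (uniqueness of $\gamma$ up to rotation, independence of $I_{\langle f(t_\gamma)\rangle}$ from the rotation, and the verification that $J$ contains no set idempotents in the converse) are flagged explicitly and are routine.
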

\begin{proof}
	Let $I$ be a non-graded prime ideal and $S:=\{w\in B_H:p_w^H\in I\}$. It follows from Lemma \ref{lempd} that $\mathcal{G}^0\setminus H$ is downward directed. Consider the quotient $L_K\big(\mathcal{G}/(H,S)\big) \cong L_K(\mathcal G)/I_{(H,S)}$ and let $\widetilde I$ be the image of $I$ under the quotient map. Since $I$ is a non-graded ideal, we have that $\widetilde I\ne\{0\}$. The argument in the proof of \cite[Theorem 4.4]{ima} implies that 
	$$\big\{[A]\neq[\emptyset]:[A]\in\Phi(\mathcal G^0)~\mathrm{and}~q_{[A]}\in \widetilde I\big\}=\emptyset.$$ 
	In view of Lemma \ref{lemgp}, this means that $\widetilde I$ is a non-graded ideal. Thus by the Cuntz-Krieger uniqueness theorem \cite[Theorem 3.6]{ima}, $\mathcal{G}/(H,S)$ contains a loop $\gamma$ without exits. Let $w\in B_H\setminus S$ and $X:=L_K\big(\mathcal{G}/(H,S)\big)/\widetilde I$. Since $\gamma$ has no exits and  $[w']$ is a sink, we get that $(q_{[w']}+\widetilde I)X(q_{s(\gamma)}+\widetilde I)=\{0\},$ in contradiction with the primeness of $X$. Hence $S=B_H$.
	
	Since $\mathcal{G}^0\setminus H$ is downward directed, we get that $\gamma$ is  unique (up to permutation). Thus, by Proposition \ref{proli}, we have $\widetilde I\subset I_{\gamma^0}$. From Lemma \ref{lemme}, we know that $I_{\gamma^0}$ is Morita equivalent to $K[x,x^{-1}]$ by the Morita correspondence $J\mapsto q_{s(\gamma)}Jq_{s(\gamma)}$. Since the primeness is preserved by the Morita correspondence, we deduce that $q_{s(\gamma)}\widetilde Iq_{s(\gamma)}$ is a prime ideal of $K[x,x^{-1}]$. Thus there exists an irreducible polynomial $f(x)$ in $K[x,x^{-1}]$ such that $q_{s(\gamma)}\widetilde Iq_{s(\gamma)}$ is generated by $f(x)$. Hence $\widetilde I=I_{\langle f(t_\gamma) \rangle}$. Since $\widetilde I\subset L_K\big(\mathcal{G}/(H,B_H)\big) \cong L_K(\mathcal G)/I_{(H,B_H)}$ and
	\begin{equation*}\begin{array}{ll}
	q_{[A]}=p_A+I_{(H,B_H)} & \mathrm{for}~~A\in\Phi(\mathcal G^0), \\
	t_{e}=s_e+I_{(H,B_H)} &  \mathrm{for}~~e\in\Phi(\mathcal{G}^1),\\
	t_{e^*}=s_{e^*}+I_{(H,B_H)} &  \mathrm{for}~~e\in\Phi(\mathcal{G}^1),
	\end{array}
	\end{equation*}
	we deduce that $I=I_{\langle H,B_H,f(s_\gamma)\rangle}$.
	
	Conversely, assume that the above three conditions hold. Denote by $\widetilde I$  the image of $I$ in the quotient $L_K(\mathcal G)/I_{(H,B_H)}$. Hence $\widetilde I=I_{\langle f(t_\gamma) \rangle}$. Since $\mathcal{G}^0\setminus H$ is downward directed, $\gamma$ is  unique (up to permutation). It follows from Proposition \ref{proli} that $\widetilde I\subset I_{\gamma^0}$. Thus $q_{s(\gamma)}\widetilde Iq_{s(\gamma)}$ is an ideal of $K[x,x^{-1}]$ generated by $f(x)$. As $f(x)$ is an irreducible
	polynomial in $K[x,x^{-1}]$, the ideal $q_{s(\gamma)}\widetilde Iq_{s(\gamma)}$ is prime and therefore, by Lemma \ref{lemme}, $\widetilde I$ is a prime ideal of $L_K(\mathcal G)/I_{(H,B_H)}$. Since $\{0\}\ne\widetilde I \subset I_{\gamma^0}$, we get that $\widetilde I$ is a non-graded ideal. Consequently, $I$ is a  non-graded prime ideal of $L_K(\mathcal G)$.
\end{proof}

\section{Primitive ideals}\label{s4}

In this section, we characterize primitive ideas of $L_K(\mathcal G)$. We determine primitive quotient ultragraph Leavitt path algebras and then we characterize graded primitive ideals. Finally, We see that the non-graded prime ideals of $L_K(\mathcal G)$ are always primitive.

\subsection{Graded primitive ideals}
A ring $R$ is said to be left (right) \emph{primitive}, if it has a faithful simple left (right) $R-$module. Since $L_K\big(\mathcal{G}/(H,S)\cong L_K\big(\mathcal{G}/(H,S)^{op}$, we deduce that $L_K\big(\mathcal{G}/(H,S)$  is left primitive if and only if it is right primitive. So we simply say it is primitive.

\begin{lem}\label{lemdq}
	Let $\mathcal{G}/(H,B_H)$ be a quotient ultragraph and
    $$X:=\big\{[A]\in\Phi(\mathcal G^0)\setminus\{[\emptyset]\}:[A]\cap[v]=[\emptyset]~\mathrm{for}~\mathrm{every}~\mathrm{vertex}~ [v]\in \Phi(G^0)\big\}.$$ 
    If $\mathcal{G}^0\setminus H$ is downward directed, then $X$ is closed under finite intersections. 
\end{lem}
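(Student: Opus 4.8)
The plan is to reduce everything to one assertion: if $[A],[B]\in X$ then $[A]\cap[B]\neq[\emptyset]$. Granting this, $[A]\cap[B]=[A\cap B]$ lies in $\Phi(\mathcal G^0)\setminus\{[\emptyset]\}$, and for every vertex $[v]\in\Phi(G^0)$ we have $[A]\cap[B]\cap[v]\subseteq[A]\cap[v]=[\emptyset]$ because $[A]\in X$; hence $[A]\cap[B]\in X$, and closure under all finite intersections follows by iteration. So the real work is the nonvanishing of the intersection, and the downward directedness hypothesis is what I would use to get it.

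First I would choose representatives $A,B\in\mathcal G^0$ of $[A],[B]$ (recall that for $\mathcal G/(H,B_H)$ one has $B_H\setminus S=\emptyset$, so $\overline{\mathcal G^0}=\mathcal G^0$ and $\Phi(G^0)=\{[v]:v\in G^0,\ \{v\}\notin H\}$). Since $[A],[B]\neq[\emptyset]$ we have $A,B\in\mathcal G^0\setminus H$, so downward directedness produces a nonempty $C\in\mathcal G^0\setminus H$ with $A\geq C$ and $B\geq C$. The crucial point I would establish is that neither of these relations can be witnessed by a path of positive length. Suppose $\beta=f_1\cdots f_m$ is such a path with $s_{\mathcal G}(\beta)\in B$ and $C\subseteq r_{\mathcal G}(\beta)$. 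From $[B]\in X$ it follows that $\{v\}\in H$ for every $v\in B$ (otherwise $[v]\in\Phi(G^0)$ and $[B]\cap[v]=[\{v\}]\neq[\emptyset]$); in particular $\{s_{\mathcal G}(f_1)\}\in H$, so $r_{\mathcal G}(f_1)\in H$ by hereditariness~(1). Then $\{s_{\mathcal G}(f_2)\}\subseteq r_{\mathcal G}(f_1)\in H$, so $\{s_{\mathcal G}(f_2)\}\in H$ by~(3) and $r_{\mathcal G}(f_2)\in H$ by~(1); an induction along $\beta$ gives $r_{\mathcal G}(f_i)\in H$ for all $i$, hence $r_{\mathcal G}(\beta)\in H$ and then $C\in H$ by~(3) --- contradicting $C\notin H$. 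The identical argument rules out a positive-length witness for $A\geq C$.

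Consequently both $A\geq C$ and $B\geq C$ hold via containment, i.e.\ $C\subseteq A$ and $C\subseteq B$, so $C\subseteq A\cap B\in\mathcal G^0$; if $A\cap B\in H$ then $C\in H$ by~(3), which is impossible, so $A\cap B\notin H$ and $[A]\cap[B]=[A\cap B]\neq[\emptyset]$, as needed. I expect the main obstacle to be precisely the ``no positive-length path'' step --- one has to propagate the hereditariness conditions edge by edge along $\beta$, using that $s_{\mathcal G}(f_{i+1})\in r_{\mathcal G}(f_i)$ --- while the rest is routine manipulation of representatives together with the already-available description of $\Phi(\mathcal G^0)$ and its intersection operation.
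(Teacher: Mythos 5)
Your proof is correct and follows essentially the same route as the paper: the key step in both is that for $[A]\in X$ the relation $A\geq C$ cannot be witnessed by a positive-length path (since every vertex of $A$ lies in $H$, hereditariness propagates along the path and would force $C\in H$), hence $C\subseteq A$, and downward directedness then places a set of $\mathcal G^0\setminus H$ inside the intersection. The only cosmetic difference is that you treat binary intersections and iterate, while the paper runs an explicit descending chain $C_1\supseteq C_2\supseteq\cdots$ for an $n$-fold intersection; your edge-by-edge propagation of the hereditariness conditions is exactly the detail the paper leaves implicit.
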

\begin{proof}
	Suppose that $\mathcal{G}^0\setminus H$ is downward directed. We show that if $[A]\in X$ and $A\geq C$ for some $C\in\mathcal{G}^0\setminus H$, then  $C\subseteq A$. If $C\nsubseteq A$, then there is a path $\alpha$ of positive length such that $s_{\mathcal{G}}(\alpha)\in A$ and $C\subseteq r_{\mathcal{G}}(\alpha)$. So $\{s_{\mathcal{G}}(\alpha)\} \in H$. Thus, by the hereditary property of $H$, $C\in H$, which is impossible.
	
	Now, let $[A_1],\ldots,[A_n]\in X$ and $A:=A_1\cap\cdots\cap A_n$. Set $C_1=A_1$. Since $\mathcal{G}^0\setminus H$ is downward directed, there exists $C_2\in \mathcal{G}^0\setminus H$ such that $C_1,A_2\geq C_2$. Therefore $C_2\subseteq C_1\cap A_2$. Similarly, there exists $C_3\in \mathcal{G}^0\setminus H$ such that $C_3\subseteq C_2\cap A_2$. Repeating this process we find $C_n\in\mathcal{G}^0\setminus H$ such that $C_n\subseteq A$. Hence $A\in \mathcal{G}^0\setminus H$ and thus $[A]\ne[\emptyset]$. Since $[A]\cap[v]=[\emptyset]$ for all $[v]\in \Phi(G^0)$, we conclude that $[A]\in X$.
\end{proof}

Let $A$ be a unital ring. By \cite[Theorem 1]{for}, $A$ is left primitive if and only if there is a left ideal $M\ne A$ of $A$ such that for every nonzero two-sided ideal $I$ of $A$, $M+I=A$. We use this to prove the following theorem.

\begin{thm}\label{thmgpi}
	Let $\mathcal{G}/(H,S)$ be a quotient ultragraph. Then $L_K\big(\mathcal{G}/(H,S)\big)$ is primitive if and only if  one of the following holds:
	\begin{enumerate}
		\item[(i)] $S=B_H$, $\mathcal{G}/(H,S)$ satisfies Condition (L) and $\mathcal{G}^0\setminus H$ is downward directed.
		\item[(ii)] $S=B_H\setminus\{w\}$ for some $w\in B_H$ and $A\geq w$ for all $A\in\mathcal{G}^0\setminus H$.
	\end{enumerate}	
\end{thm}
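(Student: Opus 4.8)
The plan is to prove the two directions separately, using the Fisher--Snider-type criterion cited before the theorem (primitivity $\Leftrightarrow$ existence of a left ideal $M\neq A$ that meets every nonzero two-sided ideal trivially, i.e. $M+I=A$ for all $0\neq I\triangleleft A$) together with the structural results already established: Theorem \ref{thmgp} (graded prime ideals come in the two families $X_1,X_2$), Lemma \ref{lemgp} (a nonzero graded ideal contains a nonzero $q_{[A]}$), Proposition \ref{proli} (an ideal with no nonzero $q_{[A]}$ sits inside $I_{P_c}$), and Lemma \ref{lemdq} (downward directedness makes the set $X$ of ``vertex-free'' idempotent supports closed under finite intersection). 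Since every primitive ring is prime, both cases (i) and (ii) must already be among the graded or non-graded prime ideals; in fact (i) and (ii) are precisely the conditions cutting out $X_1$ and $X_2$ from Theorem \ref{thmgp} after adding Condition (L) to case (i), so the content is really: \emph{a graded prime ideal is primitive iff the corresponding quotient satisfies Condition (L), and case (ii) is automatically primitive.}

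For the ``if'' direction in case (ii): here $S=B_H\setminus\{w\}$, so $[w']$ is a sink in $\mathcal{G}/(H,S)$ and $q_{[w']}L_K(\mathcal{G}/(H,S))q_{[w']}=Kq_{[w']}$ is a field; the hypothesis $A\geq w$ for all $A\in\mathcal{G}^0\setminus H$ means every nonzero $q_{[A]}$ connects to $[w']$ by a path, hence (via Lemma \ref{lemgp} and Theorem \ref{thmgp}, which already shows this ideal is prime) the ideal generated by $q_{[w']}$ is all of $L_K(\mathcal{G}/(H,S))$. I would then take $M$ to be a maximal left ideal containing $\{q_{[A]}-q_{[A]}q_{[w']}t_{\alpha_A}t_{\alpha_A}^* : \dots\}$-type elements — more cleanly, let $M := \ker(\lambda)$ for the obvious left-module map onto the simple module $L_K(\mathcal{G}/(H,S))q_{[w']}/(\text{maximal submodule})$ and check $M+I = A$ for every nonzero ideal $I$ using that $I$, being in a prime ring whose socle is generated by $q_{[w']}$, must contain $q_{[w']}$. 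For case (i): with $S=B_H$, $\mathcal{G}^0\setminus H$ downward directed and Condition (L), I would build a faithful simple module directly. Fix a ``cofinal descending chain'' in $\mathcal{G}^0\setminus H$ furnished by downward directedness and Lemma \ref{lemdq}, use Condition (L) to ensure that along this chain no loop-without-exit obstruction occurs, and mimic the branching-vertex / infinite-path module construction of Leavitt path algebras (as in the $G_F$ approximation of Lemma \ref{lemci}): the module is $V=\varinjlim q_{[A_n]}L_K(\mathcal{G}/(H,S))$ or an infinite-path space, simplicity comes from downward directedness (any two cyclic submodules intersect), and faithfulness comes from Lemma \ref{lemci} (every element outside $I_{P_c}$ acts nontrivially) combined with Condition (L) forcing $I_{P_c}=\{0\}$ in this quotient.

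For the ``only if'' direction: suppose $L_K(\mathcal{G}/(H,S))$ is primitive, hence prime, hence by Theorem \ref{thmgp} either $S=B_H$ with $\mathcal{G}^0\setminus H$ downward directed, or $S=B_H\setminus\{w\}$ with $A\geq w$ for all $A$. In the second case we are already in (ii), so assume the first; I must show Condition (L) holds. If not, there is a loop $\gamma$ in $\mathcal{G}/(H,S)$ without exits, and by Lemma \ref{lemme} the ideal $I_{\gamma^0}$ is Morita equivalent to $K[x,x^{-1}]$; since $K[x,x^{-1}]$ is not primitive (it is commutative and not a field), and primitivity passes to nonzero ideals of a prime ring (a nonzero ideal of a left primitive ring is left primitive), $I_{\gamma^0}$ cannot be primitive — contradiction. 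Hence Condition (L) holds and we are in case (i).

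The main obstacle is the explicit construction of the faithful simple module in case (i): unlike the Leavitt path algebra of an ordinary graph, here vertices are equivalence classes $[A]$ of sets, the ``$q_{[A]}$ with $[A]$ meeting no single vertex'' phenomenon (the set $X$ of Lemma \ref{lemdq}) must be handled, and one has to choose the base chain/infinite path carefully so that downward directedness gives simplicity while Condition (L) prevents the module from being annihilated by a loop-type ideal. I expect to reduce this to the known Leavitt-path-algebra case by passing to the finite graphs $G_F$ of the pre-Theorem \ref{thmnp} approximation and taking a limit, using Remark \ref{remlc} to transport Condition (L) and loops back and forth, and using that $P_c(G_F)$ is built from $P_c(\mathcal{G}/(H,S))$ so that Condition (L) on the quotient ultragraph forces $I_{P_c(G_F)}=\{0\}$ for all relevant $F$.
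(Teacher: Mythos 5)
Your ``only if'' direction is sound and is essentially the paper's: primitive $\Rightarrow$ prime $\Rightarrow$ one of the two cases of Theorem \ref{thmgp}, and in the case $S=B_H$ a loop without exits would give, via Lemma \ref{lemme}, an ideal Morita equivalent to the non-primitive ring $K[x,x^{-1}]$, contradicting the fact that a nonzero two-sided ideal of a primitive ring is primitive. The problem is the ``if'' direction, which is the substance of the theorem and which you do not actually prove. In case (i) you explicitly defer ``the main obstacle'': the faithful simple module is never constructed. The two routes you sketch are both problematic. The direct limit $\varinjlim q_{[A_n]}L_K\big(\mathcal{G}/(H,S)\big)$ has no specified connecting maps and ``any two cyclic submodules intersect'' does not yield simplicity; and the proposed reduction to the finite graphs $G_F$ ``by taking a limit'' does not work, because a faithful simple $L_K(G_{F_n})$-module has no canonical extension to a faithful simple $L_K(G_{F_{n+1}})$-module along the inclusion, so faithfulness and simplicity are not inherited in the limit. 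The paper avoids module constructions entirely: it embeds $L_K\big(\mathcal{G}/(H,S)\big)$ as a two-sided ideal in a unital prime $K$-algebra $R$ (Lanski--Resco--Small) and verifies Formanek's criterion by exhibiting an explicit proper left ideal $M$ with $M+I=R$ for all nonzero ideals $I$; crucially it needs \emph{two} subcases according to whether the set $X$ of Lemma \ref{lemdq} is empty (taking $M=\sum_i R(1-t_{\lambda_i}t_{\lambda_i^*})$ for a nested sequence of paths $\lambda_i$ built from downward directedness and countability) or nonempty (taking $M=\sum_{[A]\in X}R(1-q_{[A]})$, where properness is exactly what Lemma \ref{lemdq} provides). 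You acknowledge that $X$ ``must be handled'' but give no mechanism; this is not a detail, since when $X\neq\emptyset$ there may be no infinite path or vertex chain to hang a module on.

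Case (ii) also has a concrete error: the ideal generated by $q_{[w']}$ is \emph{not} in general all of $L_K\big(\mathcal{G}/(H,S)\big)$ (it is the graded ideal of the hereditary saturated closure of the sink $[w']$, which need not contain the other vertex idempotents). What is true, and what the paper uses, is the reverse implication: every nonzero two-sided ideal $I$ contains some nonzero $q_{[A]}$, and then $A\geq w$ produces a path $\alpha$ with $t_{\alpha^*}q_{[A]}t_\alpha=q_{r(\alpha)}\geq q_{[w']}$, so $q_{[w']}\in I$ and $M=R(1-q_{[w']})$ works. But to know that an arbitrary (possibly non-graded) nonzero ideal contains a nonzero set idempotent you must first verify that $\mathcal{G}/(H,S)$ satisfies Condition (L) in case (ii) and invoke the Cuntz--Krieger uniqueness theorem; the paper proves this Condition (L) claim (every loop has an exit because $s(\gamma)\geq w$ and $[w']$ is a sink off the loop), and your proposal omits it, relying instead on Lemma \ref{lemgp}, which applies only to graded ideals.
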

\begin{proof}
	First, suppose that $L_K\big(\mathcal{G}/(H,S)\big)$ is primitive. Then $L_K\big(\mathcal{G}/(H,S)\big)$ is prime and thus $I_{(H,S)}$ is a prime ideal of $L_K(\mathcal G)$. From Proposition \ref{probn} we deduce that $|B_H\setminus S|\leq1$. So either  $|B_H\setminus S|=1$ or $S=B_H$. Thus (ii) follows by Theorem \ref{thmgp} or (i) holds by Theorem \ref{thmgp} and appalling Lemma \ref{lemme}, respectively. 
	
	Conversely, suppose that (i) holds. Applying Theorem \ref{thmgp}, the graded ideal $I_{(H,S)}$ is prime which implies that   $L_K\big(\mathcal{G}/(H,S)\big)$ is a prime ring.  By \cite[Lemmas 2.1 and 2.2]{lan}, there exists a prime unital
	$K$-algebra $R$ such that $L_K\big(\mathcal{G}/(H,S)\big)$ embeds in $R$ as a two-sided ideal and primitivity of  $R$ gives  the primitivity of $L_K\big(\mathcal{G}/(H,S)\big)$ and vice versa. So it is enough to show that $R$ is primitive. Suppose that $X$ is the set defined in Lemma \ref{lemdq}. We distinguish two cases depending on $X$. 
	
    Case 1. Let $X=\emptyset$ and let $[v]$ be a vertex in $\mathcal{G}/(H,S)$. Define $H=\big\{[u]\in\Phi(G^0):v\geq u\big\}$. Since $\Phi(G^0)$ is  countable we can write $H=\big\{[v_1],[v_2],\ldots\big\}$. We claim that there exists a sequence $\{\lambda_i\}_{i=1}^{\infty}$ in $\mathrm{Path}\big(\mathcal{G}/(H,S)\big)$ such that for every $i\in\mathbb N$, $\lambda_{i+1}=\lambda_i\mu_i$ for some path $\mu_i\in \mathrm{Path}\big(\mathcal{G}/(H,S)\big)$ and also $v_i\geq s_{\mathcal G}(\mu_i)$. Note that for every $[A]\in\Phi(\mathcal G^0)$ we define $s_{\mathcal G}([A])=r_{\mathcal G}([A])=A$.  
      
    Set $\lambda_1=[v_1]$, since $\mathcal{G}/(H,S)$ is downward directed, there exists $A_{2}\in\mathcal G^0\setminus H$ such that $v_{2}\geq A_{2}$ and $r_{\mathcal G}(\lambda_{1})\geq A_{2}$. If $A_{2}\subseteq r_{\mathcal G}(\lambda_{1})$, then  $A_2=\{v_1\}$ and take $\mu_{1}=[A_2]$. If $A_{2}\nsubseteq r_{\mathcal G}(\lambda_{1})$, then there is a path $\mu_{1}$ of positive length such that $s_{\mathcal G}(\mu_{1})\in r_{\mathcal G}(\lambda_{1})$ and $A_2\subseteq r_{\mathcal G}(\mu_{1})$. Now define $\lambda_2=\lambda_{1}\mu_1$. Also $v_1\geq s_{\mathcal G}(\mu_{1})$, $v_{2}\geq A_{2}$ and $A_{2}\subseteq r_{\mathcal G}(\lambda_{2})$. Now by induction we assume that there exist $\lambda_1,\ldots,\lambda_{n}$ with the previous properties. Corresponding there exist $A_{k}\in\mathcal G^0\setminus H$ for $k\in\{1,2,\ldots,n\}$ such that $v_{k}\geq A_{k}$ and $A_{k}\subseteq r_{\mathcal G}(\lambda_{k})$. Since $[A_n]\ne[\emptyset]$ and $X=\emptyset$, there is a vertex $[u_n]\in\Phi(G^0)$ such that $u_n\subseteq A_n$. Hence $v_{n}\geq u_{n}$ and $[u_n]\subseteq r(\lambda_{n})$, one can show that there is a path $\mu_{n}\in\mathrm{Path}(\mathcal{G})$ such that $s_{\mathcal G}(\mu_{n})=u_n$. Now define $\lambda_{n+1}=\lambda_{n}\mu_n$ and corresponding $A_{n+1}\in\mathcal G^0\setminus H$ with the same property as before.
     
    Now set
    $$M=\sum\limits_{i=1}^{\infty} R(1-t_{\lambda_i}t_{\lambda_i^*}),$$
    which is a left ideal of $R$. If $1\in M$, then  $1=\sum\nolimits_{i=1}^{n} r_i(1-t_{\lambda_i}t_{\lambda_i^*})$ for some $n\in\mathbb N$ and $r_1,\ldots,r_n\in R$.  Since $t_{\lambda_j}t_{\lambda_j^*}t_{\lambda_k}t_{\lambda_k^*}=t_{\lambda_k}t_{\lambda_k^*}\quad(k\geq j)$, we have  $$t_{\lambda_n}t_{\lambda_n^*}=\sum\nolimits_{i=1}^{n} r_i(1-t_{\lambda_i}t_{\lambda_i^*})t_{\lambda_n}t_{\lambda_n^*}=0,$$
    which is a contradiction. Hence $1\notin M$ and $M$ is a proper
    left ideal of $R$.
    
    By \cite[Theorem 1]{for}, to prove that $R$ is primitive,  It is enough to show that $M+I=R$ for every nonzero two-sided ideal $I$ of $R$. Take $I_1=I\cap L_K\big(\mathcal{G}/(H,S)\big)$. But $R$ is prime,  so $I_1$ is a nonzero two sided ideal of $L_K\big(\mathcal{G}/(H,S)\big)$. Since $\mathcal{G}/(H,S)$ satisfies Condition (L), by the Cuntz-Krieger uniqueness theorem \cite[Theorem 3.6]{ima}, there exists $[\emptyset]\ne[A]\in\Phi(\mathcal G^0)$ such that $q_{[A]}\in I_1$. By downward directedness, there exists $C\in\mathcal G^0\setminus H$ for which $v\geq C$ and $A\geq C$. As $X=\emptyset$, there is a vertex $[z]\in\Phi( G^0)$ such that  $z\in C$. Hence $v\geq z$ and thus $z=v_n$ for some $n\geq1$. We know that $v_n\geq s_{\mathcal G}(\mu_n)$, where $\lambda_{n+1}=\lambda_n\mu_n$. Consequently, $A\geq C\geq v_n\geq s_{\mathcal G}(\mu_n)\geq r_{\mathcal G}(\lambda_{n+1})$. Since $q_{[A]}\in I_1$, we deduce that $q_{r(\lambda_{n+1})}\in I_1$ and $t_{\lambda_{n+1}}t_{\lambda^*_{n+1}}\in I_1$. Thus
    $$1=(1-t_{\lambda_{n+1}}t_{\lambda^*_{n+1}})+t_{\lambda_{n+1}}t_{\lambda^*_{n+1}}\in M+I,$$
    which implies that $M+I=R$.
    
    Case 2. Let $X\ne\emptyset$. Define
    $M=\sum\limits_{[A]\in X} R(1-q_{[A]}),$ which is a left ideal of $R$. We claim that $M$ is proper, otherwise let $1=\sum\nolimits_{i=1}^{n} r_i(1-q_{[A_i]})\in M$, where $[A_1],\ldots,[A_n]\in X$ and $r_1,\ldots,r_n\in R$. Then $q_{[A]}=\sum\nolimits_{i=1}^{n} r_i(1-q_{[A_i]})q_{[A]}=0$, where $A:=A_1\cap\cdots\cap A_n$. By Lemma \ref{lemdq}  $[A]\neq[\emptyset]$, so $q_{[A]}\ne0$, a contradiction.
       
    Consider  $I$ and $I_1$ as before. Choose $[A]\in X$, $C\in\mathcal G^0\setminus H$ and $[\emptyset]\ne[B]\in\Phi(\mathcal G^0)$ such that $q_{[B]}\in I_1$, $B\geq C$ and $A\geq C$. As we have seen in the proof of Lemma \ref{lemdq}, $[C]\in X$. Since $B\geq C$, we have $q_{[C]}\in I_1$. Therefore $1=(1-q_{[C]})+q_{[C]}\in M+I$. Hence $M+I=R$  and consequently $R$ is primitive. 
    
    Finally, suppose that (ii) holds. Then $\mathcal{G}^0\setminus H$ is downward directed. By Theorem \ref{thmgp}, $L_K\big(\mathcal{G}/(H,S)\big)$ is a prime ring. We claim that $\mathcal{G}/(H,S)$ satisfies Condition (L), now with a similar argument as in the previous part if we take $M=R(1-q_{[w']})$), one can show that $M+I=R$ for every nonzero two-sided ideal $I$ of $R$ and consequently $R$ is primitive. 
      
    To prove the claim,  let $\gamma=e_1e_2\cdots e_n$ be a loop in $\mathcal{G}/(H,S)$.  By (ii) we have $s_{\mathcal G}(\gamma)\geq w$, so either $w=s_{\mathcal G}(\gamma)$ or $w\ne s_{\mathcal G}(\gamma)$. Thus  $r(e_n)\ne s(e_1)$ or  there is a path $\alpha$ such that $s_{\mathcal G}(\alpha)=s_{\mathcal G}(\gamma)$ and $\{w\}\subseteq r_{\mathcal G}(\alpha)$, respectively. Therefore $\gamma$ has an exit in $\mathcal{G}/(H,S)$.
 \end{proof}

A two-sided ideal $I$ of a ring $R$ is called a left (right) \emph{primitive ideal} if $R/I$ is a left (right) primitive ring. We note that a graded ideal $I_{(H,S)}$ of $L_K(\mathcal G)$ is left primitive if and only if it is  right primitive.

Theorem \ref{thmgpi} immediately yields the following.

\begin{cor}\label{corpri}
	Let $\mathcal{G}$ be an  ultragraph. A graded ideal $I_{(H,S)}$ of $L_K(\mathcal G)$ is primitive if and only if  one of the following holds:
	\begin{enumerate}
		\item[(i)] $S=B_H$, $\mathcal{G}^0\setminus H$ is downward directed and every loop in $\mathcal{G}^0\setminus H$ has an exit in $\mathcal{G}^0\setminus H$.
		\item[(ii)] $S=B_H\setminus\{w\}$ for some $w\in B_H$  and $A\geq w$ for all $A\in\mathcal{G}^0\setminus H$.
	\end{enumerate}
\end{cor}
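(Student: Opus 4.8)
The final statement is Corollary~\ref{corpri}, which characterizes when a graded ideal $I_{(H,S)}$ of $L_K(\mathcal G)$ is primitive. The plan is to derive it as a straightforward consequence of Theorem~\ref{thmgpi} together with the isomorphism $L_K\big(\mathcal{G}/(H,S)\big) \cong L_K(\mathcal G)/I_{(H,S)}$ recorded in Section~\ref{s2} (from \cite[Theorem 4.4]{ima}). First I would recall that, by definition, $I_{(H,S)}$ is a left (right) primitive ideal of $L_K(\mathcal G)$ precisely when the quotient ring $L_K(\mathcal G)/I_{(H,S)}$ is left (right) primitive; and since that quotient is isomorphic to $L_K\big(\mathcal{G}/(H,S)\big)$, which is isomorphic to its own opposite algebra, the left/right distinction disappears, so $I_{(H,S)}$ is primitive exactly when $L_K\big(\mathcal{G}/(H,S)\big)$ is primitive.

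Next I would invoke Theorem~\ref{thmgpi} verbatim: $L_K\big(\mathcal{G}/(H,S)\big)$ is primitive if and only if either (i) $S=B_H$, $\mathcal{G}/(H,S)$ satisfies Condition (L), and $\mathcal{G}^0\setminus H$ is downward directed; or (ii) $S=B_H\setminus\{w\}$ for some $w\in B_H$ and $A\geq w$ for all $A\in\mathcal{G}^0\setminus H$. Condition (ii) is already stated in exactly the form appearing in the corollary, so nothing needs to be done there. For condition (i), the only rephrasing required is to replace the clause ``$\mathcal{G}/(H,B_H)$ satisfies Condition (L)'' by the equivalent graph-theoretic clause ``every loop in $\mathcal{G}^0\setminus H$ has an exit in $\mathcal{G}^0\setminus H$.'' This equivalence is precisely the observation recorded just before Theorem~\ref{thmnp}: the quotient ultragraph $\mathcal{G}/(H,B_H)$ satisfies Condition (L) if and only if every loop in $\mathcal{G}^0\setminus H$ has an exit in $\mathcal{G}^0\setminus H$. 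Substituting this into clause (i) of Theorem~\ref{thmgpi} yields clause (i) of the corollary.

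There is essentially no obstacle here; the corollary is a packaging of Theorem~\ref{thmgpi}. The one point worth stating carefully is why the left/right primitivity ambiguity may be suppressed when passing to ideals of $L_K(\mathcal G)$: this is the remark ``a graded ideal $I_{(H,S)}$ of $L_K(\mathcal G)$ is left primitive if and only if it is right primitive,'' which in turn rests on the algebra isomorphism $L_K\big(\mathcal{G}/(H,S)\big)\cong L_K\big(\mathcal{G}/(H,S)\big)^{op}$ used at the start of the subsection. Thus the proof is a single line: combine the definition of primitive ideal, the isomorphism $L_K(\mathcal G)/I_{(H,S)}\cong L_K\big(\mathcal{G}/(H,S)\big)$, Theorem~\ref{thmgpi}, and the Condition (L) reformulation, and the statement follows.
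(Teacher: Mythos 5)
Your proof is correct and is essentially the paper's own argument: the paper derives the corollary immediately from Theorem~\ref{thmgpi} via the isomorphism $L_K(\mathcal G)/I_{(H,S)}\cong L_K\big(\mathcal{G}/(H,S)\big)$ and the observation, recorded just before Theorem~\ref{thmnp}, that $\mathcal{G}/(H,B_H)$ satisfies Condition (L) exactly when every loop in $\mathcal{G}^0\setminus H$ has an exit in $\mathcal{G}^0\setminus H$. Nothing is missing.
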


\subsection{Non-graded primitive ideals}
It is well known that every primitive ideal of a ring is prime. The next theorem shows that every non-graded prime ideal of $L_K(\mathcal G)$ is primitive.

\begin{thm}\label{thmnpr}
	Let $\mathcal{G}$ be an  ultragraph. A non-graded prime ideal of $L_K(\mathcal G)$ is prime if and only if it is primitive.
\end{thm}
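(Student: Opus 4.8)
We must show that a non-graded prime ideal $I$ of $L_K(\mathcal G)$ is primitive; the reverse implication, that every primitive ideal is prime, is standard (if $R/I$ has a faithful simple left module $V$ and $A,B$ are ideals of $R/I$ with $AB=0$ and $B\neq0$, then $BV=V$, so $AV=(AB)V=0$, whence $A=0$).

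The plan is to read off the structure of $I$ from Theorem~\ref{thmnp}. Writing $H:=H_I$, we get that $\mathcal G^0\setminus H$ is downward directed, contains a loop $\gamma$ without exits in $\mathcal G^0\setminus H$, and $I=I_{\langle H,B_H,f(s_\gamma)\rangle}$ for some irreducible $f\in K[x,x^{-1}]$. Passing to the quotient $L_K(\mathcal G)/I_{(H,B_H)}\cong L_K(\mathcal G/(H,B_H))$ and denoting by $\widetilde I$ the image of $I$, we have $\widetilde I=I_{\langle f(t_\gamma)\rangle}$; since $\mathcal G^0\setminus H$ is downward directed the loop $\gamma$ is unique up to permutation, so by Proposition~\ref{proli} $\widetilde I\subseteq I_{\gamma^0}$, and in fact $\widetilde I\subsetneq I_{\gamma^0}$ because $f$ is not a unit. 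The ring $Q:=L_K(\mathcal G/(H,B_H))/\widetilde I\cong L_K(\mathcal G)/I$ is prime by hypothesis, and it therefore suffices to prove that $Q$ is primitive.

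The key point is that $Q$ has a nonzero idempotent with a division-ring corner. Put $e:=q_{s(\gamma)}+\widetilde I\in Q$. Since $q_{s(\gamma)}\in I_{\gamma^0}$ and $I_{\gamma^0}$ is a two-sided ideal, one has $q_{s(\gamma)}L_K(\mathcal G/(H,B_H))q_{s(\gamma)}=q_{s(\gamma)}I_{\gamma^0}q_{s(\gamma)}$, which by the proof of Lemma~\ref{lemme} is isomorphic to $K[x,x^{-1}]$ with $q_{s(\gamma)}\mapsto1$ and, by the computation in the proof of Theorem~\ref{thmnp}, with $q_{s(\gamma)}\widetilde Iq_{s(\gamma)}\mapsto\langle f(x)\rangle$. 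Hence $eQe\cong K[x,x^{-1}]/\langle f(x)\rangle$, which is a field since $f$ is irreducible in the principal ideal domain $K[x,x^{-1}]$; in particular $e\neq0$ and $eQe$ is a division ring $D$. It now remains to establish the ring-theoretic fact: \emph{a prime ring $Q$ with an idempotent $e\neq0$ such that $eQe=:D$ is a division ring is left primitive.} For this I would look at the left $Q$-module $Qe$. Every proper submodule $N$ satisfies $eN=0$: indeed $eN$ is a left ideal of $D$, hence $0$ or $D$, and $eN=D$ gives $e\in N$, i.e. $N=Qe$. Consequently the sum $N^{*}$ of all proper submodules again satisfies $eN^{*}=0$, so $N^{*}$ is proper and $Qe/N^{*}$ is a simple left $Q$-module. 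Its annihilator $P$ is an ideal with $Pe\subseteq P\cdot Qe\subseteq N^{*}$, so $ePe\subseteq eN^{*}=0$; then $(QeQ)P(QeQ)\subseteq Q(ePe)Q=0$ while $QeQ\neq0$, and primeness forces $P=0$. Thus $Qe/N^{*}$ is faithful and $Q$ is left primitive; the same argument applied to $Q^{op}$ (prime, with idempotent $e$ and $eQ^{op}e\cong D^{op}$ a division ring) shows $Q$ is also right primitive, so $I$ is a primitive ideal. (Alternatively, paralleling the proof of Theorem~\ref{thmgpi}, one could embed $Q$ in a unital prime $K$-algebra $R$ via \cite[Lemmas 2.1 and 2.2]{lan} and then, for each nonzero ideal $\mathfrak a$ of $R$, produce a proper left ideal $M$ with $M+\mathfrak a=R$ and invoke \cite[Theorem~1]{for}.)

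I expect the main obstacle to be the general ring-theoretic step that extracts primitivity of $Q$ from the division-ring corner $eQe$, together with the bookkeeping that $eQe$ really is the field $K[x,x^{-1}]/\langle f(x)\rangle$ — that is, that forming the corner at $q_{s(\gamma)}$ and passing to the quotient by $\widetilde I$ commute — where one must be careful both about the non-unitality of $L_K(\mathcal G)$ and about exactly which ideals are being quotiented.
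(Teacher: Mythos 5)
Your proof is correct and follows essentially the same route as the paper: both reduce to the observation that the corner $\overline P_v\,(L_K(\mathcal G)/I)\,\overline P_v\cong K[x,x^{-1}]/\langle f(x)\rangle$ is a field (the paper writes ``$\cong K$'', which should really be ``a finite field extension of $K$'', but this is immaterial) and then transfer primitivity from the corner to the whole prime ring. The only difference is that you supply a complete proof of that transfer step --- a prime ring with a nonzero idempotent whose corner is a division ring is primitive, via the module $Qe/N^{*}$ --- which the paper merely asserts with ``$L_K(\mathcal G)/I$ has a primitive corner and so $I$ is primitive''; your argument for this step is sound.
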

	
\begin{proof}
	Let $I$ be a non-graded prime ideal of $L_K(\mathcal G)$. By Theorem \ref{thmnp}, $I=I_{\langle H,B_H,f(s_\gamma)\rangle}$, where $\mathcal{G}^0\setminus H$ is a downward directed set containing a loop $\gamma$ without exits in $\mathcal{G}^0\setminus H$ and $f(x)$ is an irreducible polynomial in $K[x,x^{-1}]$. Let $\widetilde I=I/I_{(H,B_H)}$ and $s_{\mathcal G}(\gamma)=v$. As in the proof of Theorem \ref{thmnp}, the ideal $q_{[v]}\widetilde Iq_{[v]}$ of $K[x,x^{-1}]$ generated by $f(x)$ is a maximal ideal. Hence $K[x,x^{-1}]/q_{[v]}\widetilde Iq_{[v]}\cong K$. On the other hand, we have
	$$\overline P_v\bigg(\frac{L_K(\mathcal G)}{I}\bigg)\overline P_v \cong \frac{q_{[v]}\bigg(\frac{L_K(\mathcal G)}{I_{(H,B_H)}}\bigg)q_{[v]}}{q_{[v]}\bigg(\frac{I}{I_{(H,B_H)}}\bigg)q_{[v]}}\cong \frac{q_{[v]}L_K\big(\frac{\mathcal{G}}{(H,B_H)}\big)q_{[v]}}{q_{[v]}\widetilde Iq_{[v]}} \cong \frac{K[x,x^{-1}]}{q_{[v]}\widetilde Iq_{[v]}},$$
	where $\overline P_v=p_v+I$ and $q_{[v]}=p_v+I_{(H,B_H)}$. Since every field is primitive, we see that $L_K(\mathcal G)/I$ has a primitive corner and so, $I$ is a primitive ideal of $L_K(\mathcal G)$.
\end{proof}

\begin{ex}
	Consider the ultragraph $\mathcal{G}$ given below.
	\begin{center}
		\begin{tikzpicture}
		\draw (1.2,0) node(x)  {$v$} (1.2,0);
		\draw[->] (1.05,.2) .. controls (0,1.2) and (2.2,1.2) ..node[above] {$e$}(1.35,.2);
		\draw [->] (1.4,.1) -- node[ above] {$e$} (2.3,.8);
		\draw (2.55,.79) node(x)  {$v_1$} (2.55,.79);
		\draw [->] (1.4,0) -- node[ above] {$e$} (2.5,0);
		\draw (2.75,0) node(x)  {$v_2$} (2.75,0);
		\draw [->] (1.4,-.1) -- node[ above] {$e$} (2.3,-.8);
		\draw (2.3,-1) node(x)  {$.$} (2.3,-1);
		\draw (2.3,-1.15) node(x)  {$.$} (2.3,-1.15);
		\draw (2.3,-1.3) node(x)  {$.$} (2.3,-1.3);
		\draw (4.3,0) node(x)  {$w$} (4.3,0);
		\draw[->] (4.35,.2) .. controls (5.7,1.5) and (5.7,-1.5) ..node[right] {$f$}(4.35,-.2);
		\draw [->] (4.1,.1) -- node[ above] {$\infty$} (2.8,.79);
		\end{tikzpicture}
	\end{center}
    Set $H_1=\overline{\{v\}}$ and $H_2=\overline{\{r_{\mathcal G}(e)\setminus\{v,v_1\}\}}$ (for $X\subseteq\mathcal G^0$, $\overline X$ denotes the smallest saturated hereditary subset of $\mathcal G^0$ containing $X$). We have that $B_{H_1}=\{w\}$ and $B_{H_2}=\emptyset$. Let $H$ be  a saturated hereditary subcollection of $\mathcal G$. It can be shown that $\mathcal G^0\setminus H$ is downward directed if and only if $H=H_1$ or $H=H_2$. It follows from Theorem \ref{thmgp} that $I_{(H_1,\{w\})}$ and $I_{(H_2,\emptyset)}$ are (graded) prime ideals of $L_K(\mathcal G)$. Since $A\geq w$ for every $A\in\mathcal{G}^0\setminus H_1$, by Corollary \ref{corpri} (ii), $I_{(H_1,\emptyset)}$ is a primitive ideal of $L_K(\mathcal G)$. $I_{(H_1,\{w\})}$ is not primitive because $f$ is a loop without exits
    in  $\mathcal{G}^0\setminus H_1$. If $g(x)$ is an irreducible polynomial in $K[x,x^{-1}]$, then, by Theorem \ref{thmnp} and Theorem \ref{thmnpr}, $I_{\langle H_1,\{w\},g(s_f)\rangle}$ is a non-graded prime (primitive) ideal in $L_K(\mathcal G)$. Since every loop in $\mathcal{G}^0\setminus H_2$ has an exit in $\mathcal{G}^0\setminus H_2$, by Corollary \ref{corpri} (i), $I_{(H_2,\emptyset)}$ is primitive. Finally, Let $X$ be the set of  prime ideals and $Y$ be the set of primitive ideals in $L_K(\mathcal G)$. Then we have
    $$X=\big\{ I_{(H_1,\{w\})},I_{(H_1,\emptyset)},I_{(H_2,\emptyset)}, I_{\langle H_1,\{w\},g(s_f)\rangle}\big\}$$
    and $Y=X\setminus\{I_{(H_1,\{w\})}\}$, where $g(x)$ is an irreducible polynomial in $K[x,x^{-1}]$.
\end{ex}

\end{document}